\title[$C^1$ Whitney Theorem for curves in $\mathbb{H}^n$]{The Whitney Extension Theorem for $C^1$, horizontal curves in the Heisenberg group}
\author{Scott Zimmerman}
\address{S.\ Zimmerman: Department of Mathematics, University of Pittsburgh, 301
  Thackeray Hall, Pittsburgh, PA 15260, USA, {\tt srz5@pitt.edu}}
\thanks{This work was supported by the NSF grant DMS-1500647.}
\keywords{Heisenberg group, Whitney extension theorem}
\subjclass[2010]{53C17}
\newtheorem{theorem}{Theorem}
\newtheorem{lemma}[theorem]{Lemma}
\newtheorem{corollary}[theorem]{Corollary}
\newtheorem{proposition}[theorem]{Proposition}
\newtheorem{claim}[theorem]{Claim}
\theoremstyle{definition}
\newtheorem{remark}[theorem]{Remark}
\newcommand{\barint}{
\rule[.036in]{.12in}{.009in}\kern-.16in \displaystyle\int }
\newcommand{\barcal}{\mbox{$ \rule[.036in]{.11in}{.007in}\kern-.128in\int $}}
\def\mvint_#1{\mathchoice
          {\mathop{\vrule width 6pt height 3 pt depth -2.5pt
                  \kern -8pt \intop}\nolimits_{\kern -3pt #1}}%
          {\mathop{\vrule width 5pt height 3 pt depth -2.6pt
                  \kern -6pt \intop}\nolimits_{#1}}%
          {\mathop{\vrule width 5pt height 3 pt depth -2.6pt
                  \kern -6pt \intop}\nolimits_{#1}}%
          {\mathop{\vrule width 5pt height 3 pt depth -2.6pt
                  \kern -6pt \intop}\nolimits_{#1}}}
\numberwithin{theorem}{section} \numberwithin{equation}{section}
\begin{document}

\begin{abstract}
For a real valued function defined on a compact set $K \subset \mathbb{R}^m$, the classical Whitney Extension Theorem from 1934 gives necessary and sufficient conditions for the existence of a $C^k$ extension to $\mathbb{R}^m$.
In this paper, we prove a version of the Whitney Extension Theorem in the case of $C^1$, horizontal extensions for mappings defined on compact subsets of $\mathbb{R}$ taking values in the sub-Riemannian Heisenberg group $\mathbb{H}^n$.
\end{abstract}

\maketitle

\section{Introduction}

In 1934, Whitney \cite{whitney} discovered a necessary and sufficient condition for the existence of an extension $\tilde{f} \in C^k(\mathbb{R}^m)$ of a continuous function 
$f:K \to \mathbb{R}$ defined on a compact set $K \subset \mathbb{R}^m$.
The purpose of this paper is to prove a version of the Whitney Extension Theorem 
for mappings from a compact subset of $\mathbb{R}$ into the sub-Riemannian Heisenberg group $\mathbb{H}^n$.
See Section \ref{sec_heisenberg} for the definitions and properties of $\mathbb{H}^n$.
Applications of Whitney's extension theorem may be found in the construction of functions with unusual differentiability properties
(see \cite{whitney2})
and the existence of $C^1$ approximations for Lipschitz mappings (see \cite[Theorem 3.1.15]{federer} or Corollary \ref{corollary} below).
Such approximations are useful in the study of rectifiable sets, 
and the notion of rectifiability has seen recent activity in the setting of $\mathbb{H}^n$ (see for example \cite{balogh, fss, kirch, mattila}).
In fact, the authors in \cite{mattila} indicate that a Whitney type extension theorem into the Heisenberg group 
would help show the equivalence of two notions of rectifiability in $\mathbb{H}^n$.
For a comprehensive summary of the work done on Whitney type questions, see the introduction and references of \cite{fefferman}.

We say that a continuous function $f:K \to \mathbb{R}$ defined on a compact set $K \subset \mathbb{R}^m$
is of Whitney class $\mathfrak{C}^1(K)$ (equivalently $f \in \mathfrak{C}^1(K)$) if there is a continuous function $\mathcal{D} f \in C(K,\mathbb{R}^m)$ such that 
\begin{equation}
\label{whitney}
\lim_{\substack{|b-a| \to 0 \\ a,b \in K}} \frac{|f(b)-f(a)-\mathcal{D}f(a) \cdot (b-a)|}{|b-a|} = 0.
\end{equation}
We will call $\mathcal{D}f$ the \emph{derivative of f in the Whitney sense} or the \emph{Whitney derivative of} $f$.
Note that, a priori, $\mathcal{D}f$ is unrelated to the classical derivative since it is simply a continuous function defined on a compact set.

Condition \eqref{whitney} is necessary for the existence of a $C^1$ extension since any smooth function defined on $\mathbb{R}^m$ 
will satisfy \eqref{whitney} on a compact set $K \subset \mathbb{R}^m$ with Whitney derivative equal to the classical derivative.
Whitney proved that \eqref{whitney} is also sufficient to guarantee the existence of a $C^1$ extension. 
That is, for any compact $K \subset \mathbb{R}^m$ and $f \in \mathfrak{C}^1(K)$, there exists a function $\tilde{f} \in C^1(\mathbb{R}^m)$ such that 
$\tilde{f}|_K = f$ and $\nabla \tilde{f} | _K = \mathcal{D}f$.
See \cite{malgrange, whitney} for proofs of this.
Whitney actually proved a similar result with higher order regularity of $f$, but we will focus only on the first order case.

The Whitney class can be defined for mappings between higher dimensional Euclidean spaces in an obvious way.
A mapping $F: K \to \mathbb{R}^N$ 
is said to be of Whitney class $\mathfrak{C}^1(K, \mathbb{R}^N)$ (equivalently $F \in \mathfrak{C}^1(K, \mathbb{R}^N)$) for a compact $K \subset \mathbb{R}^m$ 
if each component $f_j$ of $F$ is of Whitney class $\mathfrak{C}^1(K)$ with Whitney derivative $\mathcal{D}f_j$.
Call $\mathcal{D}F = (\mathcal{D}f_1, \dots, \mathcal{D}f_N):K \to (\mathbb{R}^m)^N$ the Whitney derivative of $F$.
Given any $F \in \mathfrak{C}^1(K, \mathbb{R}^N)$, we may construct a $C^1$ extension of $F$ by applying Whitney's result to each of its components.

A natural question may be asked: what form would a sort of Whitney extension theorem take in the Heisenberg group?
In 2001, Franchi, Serapioni, and Serra Cassano \cite{fss} proved a $C^1$ version of the Whitney extension theorem for mappings from the Heisenberg group $\mathbb{H}^n$ into $\mathbb{R}$.
The authors provided a concise proof highlighting the major differences between the Euclidean and Heisenberg cases.
For a full exposition of the proof, see \cite{vittone}.
In this theorem, the function defined on a compact $K \subset \mathbb{H}^n$ is extended to $C_H^1$ function.
That is, the derivatives of the extension in the horizontal directions exist and are continuous.
In 2006, Vodop'yanov and Pupyshev \cite{vp} proved a $C^k$ version of Whitney's theorem for real valued functions defined on closed subsets of general Carnot groups.

In 2013, Piotr Haj\l{}asz posed the following two questions: 
\begin{itemize}
\item (Whitney extension) What are necessary and sufficient conditions for a continuous map $f: K \to \mathbb{R}^{2n+1}$ with $K \subset \mathbb{R}^m$ compact and $m \leq n$ to have a $C^1$ extension $\tilde{f} : \mathbb{R}^m \to \mathbb{R}^{2n+1}$ satisfying $\text{im} (D\tilde{f}(x)) \subset H_{\tilde{f}(x)}\mathbb{H}^n$ for every $x \in \mathbb{R}^m$?
\item ($C^1$ Luzin property) Is it true that, for every horizontal curve $\Gamma:[a,b] \to \mathbb{H}^n$ and any $\varepsilon>0$, there is a $C^1$, horizontal curve $\hat{\Gamma}:[a,b] \to \mathbb{H}^n$ such that
$$
| \{ s \in [a,b] \, | \, \hat{\Gamma}(s) \neq \Gamma(s) \} | < \varepsilon ?
$$
\end{itemize}

\begin{remark}
\normalfont
Note that the Whitney extension problem stated above is very different from the one solved by Franchi, Serapioni, and Serra Cassano since the nonlinear constraint now lies in the target space. 
Such a constraint makes the problem much more difficult.
\end{remark}

\begin{remark}
\normalfont
We only consider the Whitney problem in the case when $m \leq n$ since, if $m > n$, we have possible topological obstacles preventing the existence of a smooth extension.
For more details, see \cite{balogh,haj}.
\end{remark}

Let us consider the Whitney extension question in the case when $m = 1$.
For $K \subset \mathbb{R}$ compact, let $\Gamma = (f_1,g_1,\dots,f_n,g_n,h):K \to \mathbb{R}^{2n+1}$ 
be continuous so that there is a $C^1$, horizontal extension $\tilde{\Gamma}:\mathbb{R} \to \mathbb{R}^{2n+1}$.
Then clearly $\Gamma \in \mathfrak{C}^1(K,\mathbb{R}^{2n+1})$ with Whitney derivative $\Gamma' := \tilde{\Gamma}'|_K$.
That is,
\begin{equation}
\label{whitney2}
\lim_{\substack{|b-a| \to 0 \\ a,b \in K}} \frac{|\Gamma(b) - \Gamma(a) - (b-a)\Gamma'(a)|}{|b-a|} = 0.
\end{equation}
$\Gamma'$ must also satisfy the horizontality condition
\begin{equation}
\label{horizontal2}
h'(s)=2 \sum_{j=1}^n (f_j'(s) g_j(s) - f_j(s) g_j'(s))
\end{equation}
for any $s \in K$ (see \eqref{horizontal})
since any $C^1$, horizontal curve defined on $\mathbb{R}$ satisfies \eqref{horizontal2} for every $s \in \mathbb{R}$.
We may ask the following: are conditions \eqref{horizontal2} and \eqref{whitney2} sufficient to guarantee the existence of a horizontal, $C^1$ extension $\tilde{\Gamma}$ of $\Gamma$?
As we see here, the answer to this is, in general, ``no''.

\begin{proposition}
\label{counterexample}
There is a compact $K \subset \mathbb{R}$ and $\Gamma=(f,g,h) \in \mathfrak{C}^1(K,\mathbb{R}^3)$ with Whitney derivative $\Gamma'=(f',g',h')$ satisfying $h'=2 (f' g - f g')$ 
so that no $C^1$, horizontal curve $\tilde{\Gamma}: \mathbb{R} \to \mathbb{H}^1$ satisfies $\tilde{\Gamma}|_K = \Gamma$.
\end{proposition}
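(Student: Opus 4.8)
The plan is to exhibit an explicit example in which the $t$-coordinate of $\Gamma$ grows too fast to be accounted for by any horizontal lift, yet is flat enough to carry a vanishing Whitney derivative. Concretely, I would take the compact set $K=\{0\}\cup\{2^{-k}:k\geq 1\}$ and define $\Gamma=(f,g,h)\colon K\to\mathbb{R}^{3}$ by $f\equiv g\equiv 0$, $h(0)=0$, and $h(2^{-k})=4^{-k}$. The candidate Whitney derivative is simply $\mathcal{D}\Gamma\equiv(0,0,0)$, which trivially satisfies the horizontality relation $h'=2(f'g-fg')$.

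First I would check that $\Gamma\in\mathfrak{C}^{1}(K,\mathbb{R}^{3})$ with this Whitney derivative. The components $f$ and $g$ are immediate (constant functions), so only $h$ needs attention, and since $\mathcal{D}h\equiv 0$, verifying \eqref{whitney} amounts to showing $|h(b)-h(a)|/|b-a|\to 0$ as $a,b\in K$ with $|b-a|\to 0$. For two points $2^{-j},2^{-k}\in K$ the quotient equals $2^{-j}+2^{-k}$ (using $|4^{-j}-4^{-k}|=|2^{-j}-2^{-k}|\,(2^{-j}+2^{-k})$), and if one of the points is $0$ the quotient is at most $2^{-k}$. Since $|b-a|\to 0$ forces both points to approach $0$ — the gaps of $K$ are comparable to the scale, as $2^{-j}-2^{-k}\geq 2^{-j-1}$ whenever $j<k$ — the quotient tends to $0$. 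Hence $\Gamma$ is of the form required in the statement.

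Next I would rule out a $C^{1}$ horizontal extension; note this step uses only that $\tilde\Gamma$ is $C^{1}$, horizontal, and restricts to $\Gamma$ on $K$. Suppose $\tilde{\Gamma}=(\tilde f,\tilde g,\tilde h)\colon\mathbb{R}\to\mathbb{H}^{1}$ is such an extension. From $\tilde f(0)=\tilde f(2^{-k})=0$ and differentiability of $\tilde f$ at $0$ one gets $\tilde f'(0)=\lim_{k}\tilde f(2^{-k})/2^{-k}=0$, and likewise $\tilde g'(0)=0$. Put $\omega(s)=\sup_{t\in[0,s]}(|\tilde f'(t)|+|\tilde g'(t)|)$; this is nondecreasing and $\omega(s)\to 0$ as $s\to 0^{+}$ by continuity of $\tilde f',\tilde g'$ at $0$. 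Integrating the derivatives from $0$ gives $|\tilde f(s)|,|\tilde g(s)|\leq s\,\omega(s)$, so the horizontality condition \eqref{horizontal2} with $n=1$ yields $|\tilde h'(s)|=2|\tilde f'(s)\tilde g(s)-\tilde f(s)\tilde g'(s)|\leq 2s\,\omega(s)^{2}$ for small $s\geq 0$. Integrating once more over $[0,2^{-k}]$ and using $\tilde h(2^{-k})-\tilde h(0)=4^{-k}$ gives $4^{-k}\leq \omega(2^{-k})^{2}\,4^{-k}$, i.e. $\omega(2^{-k})\geq 1$ for every $k$, contradicting $\omega(2^{-k})\to 0$. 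Therefore no such $\tilde{\Gamma}$ exists.

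The conceptual point, and the step I expect to be the real crux, is recognizing this order mismatch: horizontality propagates the $t$-coordinate only at a rate governed by the \emph{square} of the horizontal speed, and near an accumulation point of $K$ that speed is forced to vanish, whereas the Whitney condition is a purely first-order (linear) requirement that happily tolerates a $t$-coordinate of genuine quadratic size. Once this is seen the construction essentially writes itself; the only mildly fiddly part is confirming that the ``flat but quadratically large'' function $h$ genuinely lies in $\mathfrak{C}^{1}(K)$ with zero Whitney derivative, which is why a geometric spacing such as $2^{-k}$ — so that consecutive gaps of $K$ are comparable to the surrounding scale — is convenient.
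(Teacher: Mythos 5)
Your proposal is correct, and the non-extendability half follows a genuinely different route from the paper. The paper takes $K$ to be a union of closed intervals accumulating at $1$, sets $\Gamma=(0,0,3^{-n})$ on the $n$-th interval, and then rules out a $C^1$ horizontal extension by invoking Proposition \ref{necessary}: any such extension would have to satisfy the second-order condition \eqref{necessaryeq} on $K$, which fails because $h$ jumps by about $3^{-n}$ across gaps of length $2^{-(n+2)}$. You instead take $K=\{0\}\cup\{2^{-k}\}$ with $h(2^{-k})=4^{-k}$ and give a self-contained argument: since $\tilde f=\tilde g=0$ on $K$ and $0$ is an accumulation point, $C^1$ forces $\tilde f'(0)=\tilde g'(0)=0$, so with $\omega(s)=\sup_{[0,s]}(|\tilde f'|+|\tilde g'|)\to 0$ the horizontality ODE gives $|\tilde h'(s)|\le 2s\,\omega(s)^2$ and hence $|\tilde h(2^{-k})|\le \omega(2^{-k})^2 4^{-k}$, contradicting $h(2^{-k})=4^{-k}$ exactly at the quadratic scale. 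Both arguments are sound (your Whitney-class verification, using that the gaps of $K$ are comparable to the distance to $0$, is fine, and you correctly use only $\tilde\Gamma|_K=\Gamma$, not equality of derivatives). What each buys: the paper's route is shorter at this point because Proposition \ref{necessary} is proved anyway for the necessity direction of Theorem \ref{main}, and it isolates the general obstruction $o(|b-a|^2)$ that any example must violate; your route is more elementary and makes the mechanism transparent in this special case ($t$-growth controlled by the square of the horizontal speed, which must vanish at the accumulation point), but it exploits the fact that the horizontal projection vanishes on $K$ and so does not by itself yield the general necessary condition. Incidentally, your example also violates \eqref{necessaryeq} directly (the quotient equals $1$ along $a=0$, $b=2^{-k}$), so it would work verbatim with the paper's argument as well.
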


The next natural question to ask is the following: 
under what additional assumption does there exist a $C^1$, horizontal extension 
of $\Gamma \in \mathfrak{C}^1(K,\mathbb{R}^{2n+1})$?
The following proposition describes a necessary condition that every $C^1$, horizontal curve satisfies.

\begin{proposition}
\label{necessary}
Suppose $U \subset \mathbb{R}$ is open and $\Gamma = (f_1,g_1, \dots, f_n,g_n,h) : U \to \mathbb{H}^n$ is $C^1$ and horizontal. 
Then for any compact $K \subset U$
\begin{equation}
\label{necessaryeq}
\lim_{\substack{|b-a| \to 0 \\ a,b \in K}} \frac{\left|h(b)-h(a) - 2\sum_{j=1}^n (f_j(b)g_j(a) - f_j(a)g_j(b)) \right|}{|b-a|^2} = 0.
\end{equation}
\end{proposition}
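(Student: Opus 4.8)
The plan is to reduce \eqref{necessaryeq} to a statement about the smoothness of the curve together with the horizontality identity \eqref{horizontal2}. Fix a compact $K \subset U$ and choose a slightly larger compact interval $[\alpha, \beta]$ with $K \subset [\alpha,\beta] \subset U$. Since $\Gamma$ is $C^1$ on $U$, the functions $f_j$, $g_j$, $h$ and their derivatives are uniformly continuous on $[\alpha,\beta]$; denote by $\omega$ a common modulus of continuity for $f_j'$, $g_j'$, $h'$ on $[\alpha,\beta]$. The goal is to show that the numerator in \eqref{necessaryeq} is $o(|b-a|^2)$ uniformly for $a,b \in K$.

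First I would introduce the quantity
\begin{equation}
\label{plan_phi}
\Phi(b,a) = h(b) - h(a) - 2\sum_{j=1}^n \big(f_j(b) g_j(a) - f_j(a) g_j(b)\big)
\end{equation}
and differentiate it in $b$ for fixed $a$, treating $a$ as a parameter and $b$ as the variable ranging over $[\alpha,\beta]$. A direct computation gives
\begin{equation}
\label{plan_dphi}
\frac{\partial}{\partial b}\Phi(b,a) = h'(b) - 2\sum_{j=1}^n \big(f_j'(b) g_j(a) - f_j(a) g_j'(b)\big).
\end{equation}
Now I use the horizontality condition \eqref{horizontal2}, which holds at every point of $U$ because $\Gamma$ is $C^1$ and horizontal there: substituting $h'(b) = 2\sum_j (f_j'(b) g_j(b) - f_j(b) g_j'(b))$ into \eqref{plan_dphi} and regrouping, $\partial_b \Phi(b,a)$ becomes $2\sum_j \big( f_j'(b)(g_j(b) - g_j(a)) - g_j'(b)(f_j(b) - f_j(a)) \big)$. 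In particular $\partial_b \Phi(a,a) = 0$, and $\Phi(a,a) = 0$.

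Next I would estimate $\partial_b \Phi(b,a)$ for $b$ near $a$. Writing $g_j(b) - g_j(a) = \int_a^b g_j'(t)\,dt$ and similarly for $f_j$, and using that $f_j', g_j'$ are bounded on $[\alpha,\beta]$ by some constant $M$, one obtains $|\partial_b \Phi(b,a)| \le C M |b-a|$, which already gives $|\Phi(b,a)| \le C |b-a|^2$ but not the little-$o$. To get the little-$o$, I would be more careful: since $\partial_b \Phi(a,a) = 0$, the fundamental theorem of calculus gives $\partial_b \Phi(b,a) = \int_a^b \partial_t \partial_b \Phi(t,a)\,dt$ is not quite the cleanest route; instead I would directly bound $|\partial_b\Phi(b,a)|$ by replacing each increment $g_j(b) - g_j(a)$ by $g_j'(a)(b-a)$ up to an error controlled by the modulus of continuity: $|g_j(b) - g_j(a) - g_j'(a)(b-a)| \le |b-a|\,\omega(|b-a|)$, and likewise for $f_j$. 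Substituting and again invoking \eqref{horizontal2} at the point $a$ to cancel the main terms, the surviving terms are each a product of a bounded quantity with $|b-a|\,\omega(|b-a|)$, so $|\partial_b \Phi(b,a)| \le C(M)\,|b-a|\,\omega(|b-a|)$ whenever $|b-a|$ is small. Integrating from $a$ to $b$ and using $\Phi(a,a) = 0$ yields $|\Phi(b,a)| \le C(M)\,|b-a|^2\,\omega(|b-a|)$, which is exactly $o(|b-a|^2)$ as $|b-a| \to 0$ with $a,b \in K$, since $\omega(t) \to 0$ as $t \to 0$.

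The main obstacle — and really the only delicate point — is the bookkeeping in the cancellation: one must organize \eqref{plan_dphi} so that the horizontality identity at the base point $a$ removes precisely the $O(|b-a|)$ part of $\partial_b \Phi(b,a)$, leaving only the modulus-of-continuity error. Everything else is routine: uniform continuity of $\Gamma'$ on a compact neighborhood of $K$, boundedness of the derivatives, and a single integration. I expect the full argument to be short once the expression \eqref{plan_phi} and its $b$-derivative \eqref{plan_dphi} are written down and \eqref{horizontal2} is applied twice — once to rewrite $h'(b)$ and once at $b = a$ to kill the leading-order term.
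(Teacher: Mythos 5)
Your strategy is, in substance, the paper's own argument in differentiated form: the paper writes $h(b)-h(a)=\int_a^b h'$, uses horizontality under the integral sign, and Taylor-expands $f_j,g_j$ about $a$ with uniform errors, which is exactly what you obtain after noting $\Phi(a,a)=0$ and integrating your bound on $\partial_b\Phi(b,a)$; the target estimate $|\Phi(b,a)|\le C\,|b-a|^2\,\omega(|b-a|)$ is correct and does give \eqref{necessaryeq}. Two details need repair, though neither is a conceptual obstruction. First, the advertised second use of \eqref{horizontal2} ``at the point $a$'' is not what kills the leading term: after substituting $g_j(b)-g_j(a)=g_j'(a)(b-a)+O(|b-a|\,\omega(|b-a|))$ and likewise for $f_j$ into $\partial_b\Phi(b,a)=2\sum_j\bigl(f_j'(b)(g_j(b)-g_j(a))-g_j'(b)(f_j(b)-f_j(a))\bigr)$, the surviving main term is $2(b-a)\sum_j\bigl(f_j'(b)g_j'(a)-g_j'(b)f_j'(a)\bigr)$, which contains no $h'$ and is untouched by horizontality at $a$. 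It is nevertheless $O(|b-a|\,\omega(|b-a|))$, because $f_j'(b)=f_j'(a)+O(\omega(|b-a|))$ and $g_j'(b)=g_j'(a)+O(\omega(|b-a|))$ by uniform continuity of the derivatives, and the expression is antisymmetric, $f_j'(a)g_j'(a)-g_j'(a)f_j'(a)=0$ --- the same cancellation the paper exploits when its two terms $f_j'(a)g_j'(a)\int_a^b(t-a)\,dt$ cancel against each other. So horizontality is used exactly once (to rewrite $h'$), and the rest is antisymmetry plus the modulus of continuity, both of which you already have in place.

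Second, the reduction ``choose a compact interval $[\alpha,\beta]$ with $K\subset[\alpha,\beta]\subset U$'' can fail when $U$ is disconnected (e.g.\ $U=(0,1)\cup(2,3)$ with $K$ meeting both pieces). You need the paper's preliminary step: since $K$ is compact it is covered by finitely many components of $U$, points of $K$ lying in distinct components are separated by a fixed positive distance, so for $|b-a|$ small both points lie in a single component, and within that component one may enlarge the relevant piece of $K$ to a compact interval. With these two repairs your proof goes through and is essentially the paper's proof, differing only in bookkeeping (you differentiate the defect $\Phi(\cdot,a)$ and integrate back, rather than expanding the integrands $f_j'g_j$ and $g_j'f_j$ separately).
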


The proofs of these two propositions are presented in Section~\ref{section_cc}.
As we will now see, the main result of this paper shows that assuming condition 
\eqref{necessaryeq} in addition to \eqref{whitney2} and \eqref{horizontal2} is in fact necessary and sufficient 
for the existence of a $C^1$, horizontal extension of a continuous $\Gamma: \mathbb{R} \supset K \to \mathbb{H}^n$.
This is summarized as follows:
\begin{theorem}
\label{main}
Suppose $K \subset \mathbb{R}$ is compact. 
Suppose $\Gamma = (f_1,g_1,\dots,f_n,g_n,h) : K \to \mathbb{H}^n$ 
is of Whitney class $\mathfrak{C}^1(K,\mathbb{R}^{2n+1})$ with Whitney derivative
$\Gamma' = (f_1',g_1', \dots, f_n',g_n',h')$.

Then there is a horizontal, $C^1$ curve $\tilde{\Gamma} : \mathbb{R} \to \mathbb{H}^n$ such that
$\tilde{\Gamma}|_K = \Gamma$ and $\tilde{\Gamma}'|_K = \Gamma'$
if and only if 
\begin{equation} \label{area}
\lim_{\substack{|b-a| \to 0 \\ a,b \in K}} \frac{\left| h(b)-h(a) - 2 \sum_{j=1}^n (f_j(b)g_j(a) - f_j(a)g_j(b)) \right|}{|b-a|^2} = 0
\end{equation}
and
\begin{equation}
\label{horiz}
h'(s)=2 \sum_{j=1}^n \left(f_j'(s)g_j(s)-g_j'(s)f_j(s) \right) \quad \text{for every } s \in K.
\end{equation}
\end{theorem}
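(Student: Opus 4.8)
\medskip\noindent\textbf{Proof plan.}
The ``only if'' direction is immediate: any $C^1$ horizontal curve satisfies the horizontality identity \eqref{horizontal2} at every point of its domain, so \eqref{horiz} holds for $\tilde\Gamma$ and hence for $\Gamma$ on $K$; and \eqref{area} is exactly the conclusion of Proposition~\ref{necessary} applied with $U=\mathbb{R}$. The plan is therefore to prove the converse. Assuming \eqref{area} and \eqref{horiz}, the idea is to extend the $2n$ planar coordinates $f_j,g_j$ by the classical (scalar) Whitney theorem, let horizontality force the last coordinate, and then correct the resulting mismatch in the last coordinate over each complementary interval by splicing in a small, rescaled copy of a fixed closed horizontal loop whose signed enclosed area supplies precisely the missing amount. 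Hypothesis \eqref{area} enters as the quantitative fact that the mismatch over a complementary interval of length $\ell$ is $o(\ell^2)$ --- small enough that the splice can be carried out without destroying the $C^1$ gluing.

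\medskip\noindent\emph{Setup and the size of the mismatch.}
We may assume $K\neq\emptyset$. Over the two unbounded components of $\mathbb{R}\setminus K$ we extend by horizontal curves that are affine in the planar coordinates (these are $C^1$ at $\min K$ and $\max K$ by \eqref{horiz}), so it remains to define $\tilde\Gamma$ on the bounded components $(a_i,b_i)$ of $\mathbb{R}\setminus K$; write $\ell_i=b_i-a_i$, and recall that $\bigcup_i(a_i,b_i)\subseteq[\min K,\max K]$ and that for every $\delta>0$ only finitely many $i$ have $\ell_i\geq\delta$. Applying the classical Whitney extension theorem \cite{whitney,malgrange} to each $f_j$ and $g_j$ (using $\Gamma\in\mathfrak{C}^1(K,\mathbb{R}^{2n+1})$), we obtain $F_j,G_j\in C^1(\mathbb{R})$ with $F_j|_K=f_j$, $G_j|_K=g_j$, $F_j'|_K=f_j'$, $G_j'|_K=g_j'$. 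Since $F_j,G_j$ are $C^1$, on each $[a_i,b_i]$ they differ from their affine interpolants by $o(\ell_i)$ uniformly in $i$, and an integration by parts --- together with the elementary identity $\int_a^b(L_1'L_2-L_1L_2')\,dt=f_j(b)g_j(a)-f_j(a)g_j(b)$ for the affine interpolants $L_1,L_2$ of $f_j,g_j$ on $[a,b]$ --- gives
\[
2\sum_{j=1}^n\int_{a_i}^{b_i}\!\big(F_j'G_j-F_jG_j'\big)\,dt\;=\;2\sum_{j=1}^n\big(f_j(b_i)g_j(a_i)-f_j(a_i)g_j(b_i)\big)+o(\ell_i^2),
\]
uniformly as $\ell_i\to 0$. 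Combined with \eqref{area}, this shows that
\[
c_i\;:=\;\big(h(b_i)-h(a_i)\big)-2\sum_{j=1}^n\int_{a_i}^{b_i}\!\big(F_j'G_j-F_jG_j'\big)\,dt
\]
satisfies $|c_i|=o(\ell_i^2)$ uniformly in $i$. This is the sole place where hypothesis \eqref{area} is used.

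\medskip\noindent\emph{The splice.}
Fix once and for all a smooth loop $\beta=(\beta_1,\beta_2):[0,1]\to\mathbb{R}^2$ with $\beta_k(0)=\beta_k(1)=\beta_k'(0)=\beta_k'(1)=0$ and $\int_0^1\beta_k\,ds=0$ for $k=1,2$, and with $c_\beta:=2\int_0^1(\beta_1'\beta_2-\beta_1\beta_2')\,ds\neq 0$ (such a $\beta$ is elementary to write down); replacing $\beta(s)$ by $\beta(1-s)$ reverses the sign of $c_\beta$ while leaving $|c_\beta|$, $\|\beta_k\|_\infty$, $\|\beta_k'\|_\infty$ unchanged. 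On each gap $(a_i,b_i)$ let $I_i=(t_i-\lambda_i/2,\,t_i+\lambda_i/2)$ be the concentric subinterval of length $\lambda_i=\ell_i/4$, and define $\tilde f_j=F_j$, $\tilde g_j=G_j$ for $j\geq 2$,
\[
\tilde f_1=F_1+\delta_i\,\beta_1\!\big((\,\cdot\,-t_i)/\lambda_i\big),\qquad \tilde g_1=G_1+\delta_i\,\beta_2\!\big((\,\cdot\,-t_i)/\lambda_i\big)\qquad\text{on }I_i
\]
($\tilde f_1=F_1$, $\tilde g_1=G_1$ elsewhere), together with $\tilde h(t)=h(a_i)+2\sum_j\int_{a_i}^{t}(\tilde f_j'\tilde g_j-\tilde f_j\tilde g_j')\,dr$ on $[a_i,b_i]$ and $\tilde h=h$ on $K$. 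Expanding, one finds that $\tilde h(b_i)-\big(h(a_i)+2\sum_j\int_{a_i}^{b_i}(F_j'G_j-F_jG_j')\,dt\big)$ equals $c_\beta\,\delta_i^2+\tilde\kappa_i\,\delta_i$, where the cross term $\tilde\kappa_i$ obeys $|\tilde\kappa_i|\leq C\lambda_i\,\omega(\lambda_i)=o(\ell_i)$ uniformly, with $\omega$ a modulus of continuity for $F_1',G_1'$ on a bounded neighbourhood of $K$ --- here the vanishing zeroth and first moments of $\beta$ are what make the cross term this small. Orient $\beta$ so that $\operatorname{sgn}c_\beta=\operatorname{sgn}c_i$ and let $\delta_i\geq 0$ be the nonnegative root of $|c_\beta|\,\delta^2+\operatorname{sgn}(c_i)\,\tilde\kappa_i\,\delta=|c_i|$ (which exists since the product of the two roots is $-|c_i|/|c_\beta|\leq 0$). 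Then $\tilde h(b_i)=h(b_i)$, and $\delta_i\leq C(|\tilde\kappa_i|+\sqrt{|c_i|})=o(\ell_i)$; consequently $\|\tilde f_1-F_1\|_\infty+\|\tilde g_1-G_1\|_\infty=o(\ell_i)$ and, crucially, $\|\tilde f_1'-F_1'\|_\infty+\|\tilde g_1'-G_1'\|_\infty\leq C\delta_i/\lambda_i=o(1)$, all uniformly as $\ell_i\to 0$.

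\medskip\noindent\emph{Verification, and the main difficulty.}
Set $\tilde\Gamma=(\tilde f_1,\tilde g_1,\dots,\tilde f_n,\tilde g_n,\tilde h)$. By construction $\tilde\Gamma$ is horizontal, it is $C^1$ on each closed complementary interval (the splice being supported in the interior), and it is smooth off $K$; it equals $\Gamma$ on $K$, since the corrections and their derivatives vanish near each $a_i,b_i$, $\tilde h(a_i)=h(a_i)$ by definition, $\tilde h(b_i)=h(b_i)$ by the choice of $\delta_i$, and equality at the remaining points of $K$ follows by continuity; and $\tilde\Gamma'=\Gamma'$ on $K$, the last coordinate using \eqref{horiz}. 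For $C^1$ regularity, fix $s_0\in K$: if $t\to s_0$ through an infinite sequence of distinct gaps $(a_i,b_i)$, those gaps shrink to $s_0$, so $\ell_i\to 0$ and, by the bounds just obtained together with $F_j,G_j\in C^1$, $\tilde f_j'(t)\to f_j'(s_0)$, $\tilde g_j'(t)\to g_j'(s_0)$, $\tilde f_j(t)\to f_j(s_0)$, whence $\tilde h'(t)=2\sum_j(\tilde f_j'\tilde g_j-\tilde f_j\tilde g_j')(t)\to h'(s_0)$ by \eqref{horiz}; the remaining approaches to $s_0$ (through $K$, or within a single gap having $s_0$ as an endpoint) are handled by continuity of $\Gamma'$ on $K$ and by $\tilde\Gamma$ being $C^1$ on each closed gap. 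Thus $\tilde\Gamma'$ extends continuously from $\mathbb{R}\setminus K$ to $\Gamma'$ on $K$, and since $\tilde\Gamma$ is continuous with locally bounded derivative a standard argument yields $\tilde\Gamma\in C^1(\mathbb{R},\mathbb{R}^{2n+1})$ with $\tilde\Gamma'|_K=\Gamma'$. I expect the genuinely hard point to be the control of this splice: the planar data and the last coordinate cannot be prescribed independently, so the amplitude $\delta_i$ is forced to be of size $\sqrt{|c_i|}$, and the whole scheme closes only because \eqref{area} makes $|c_i|=o(\ell_i^2)$ --- exactly the rate needed to keep $\delta_i=o(\ell_i)$ and the derivative of the splice $o(1)$, so that the pieces glue to a genuinely $C^1$ curve. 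The supporting technical estimate is the uniform $o(\ell_i)$ bound on the cross term $\tilde\kappa_i$, which is why the moments of the model loop $\beta$ must vanish.
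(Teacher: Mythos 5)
Your construction is genuinely different from the paper's, and its core is sound. The paper fills each gap $(a^i,b^i)$ with explicitly constructed planar curves (cubic/quartic polynomials, plus a small circle in a degenerate case) having prescribed endpoint values, endpoint derivatives, $C^1$-size bounds, and prescribed signed area (conditions \eqref{cond_beta} and \eqref{cond_area}), and then lifts; you instead take classical Whitney extensions $F_j,G_j$ of the planar data and repair the height defect $c_i$ by splicing a rescaled mean-zero model loop of amplitude $\delta_i$ into the middle of each gap, solving a quadratic for $\delta_i$. Both schemes hinge on the same quantitative point, which you isolate correctly: by \eqref{area} together with the Proposition~\ref{necessary}-type computation applied to the $C^1$ functions $F_j,G_j$, the defect satisfies $|c_i|=o(\ell_i^2)$, hence $\delta_i=o(\ell_i)$ and the added derivative is $O(\delta_i/\lambda_i)=o(1)$, exactly what is needed for the candidate derivative to be continuous up to $K$; your mean-zero normalization of the loop, which makes the cross term $o(\ell_i)\delta_i$, is a clean substitute for the paper's explicit constructions. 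Your route is softer (no explicit formulas); the paper's packages the uniform control once in a lemma.

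There is, however, a genuine gap in the final gluing step. The appeal to ``$\tilde\Gamma$ is continuous with locally bounded derivative, and $\tilde\Gamma'$ extends continuously from $\mathbb{R}\setminus K$, so a standard argument gives $\tilde\Gamma\in C^1$'' is not a valid principle, and ``continuity of $\Gamma'$ on $K$'' does not handle approaches to $s_0$ through $K$. Take $K$ a fat Cantor set and $u(t)$ the Lebesgue measure of $K\cap[0,t]$: $u$ is Lipschitz, constant (hence smooth) on every complementary interval, and $u'$ extends continuously by $0$ from the complement to $K$, yet $u$ is not differentiable with derivative $0$ at density points of $K$. What rules this out in your setting is precisely the Whitney condition \eqref{whitney2} on $K$ for \emph{all} components, including $h$ (part of the hypothesis $\Gamma\in\mathfrak{C}^1(K,\mathbb{R}^{2n+1})$, or derivable as in Remark~\ref{remark_}), and it must be used: for $s_0\in K$ and $t$ in a gap $(a_i,b_i)$ between $s_0$ and $t$, split the increment at $a_i$, estimate $\tilde\Gamma(t)-\Gamma(a_i)-(t-a_i)\Gamma'(a_i)$ by integrating your uniform bound $\sup_{[a_i,b_i]}|\tilde\Gamma'-\Gamma'(a_i)|\to 0$ (the analogue of Claim~\ref{deriv_claim}), estimate $\Gamma(a_i)-\Gamma(s_0)-(a_i-s_0)\Gamma'(s_0)$ by the Whitney condition, and use continuity of $\Gamma'$ only to compare $\Gamma'(a_i)$ with $\Gamma'(s_0)$; for $t\in K$ use the Whitney condition directly. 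This is exactly the paper's argument for differentiability on $K$. Your estimates supply every input for it, so the gap is repairable, but as written the $C^1$ conclusion does not follow from what you cite.
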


\begin{remark}
\label{remark_}
\normalfont
We actually do not need to assume that $h \in \mathfrak{C}^1(K)$ 
because it is a consequence of \eqref{area} 
and the fact that $f_j \in \mathfrak{C}^1(K)$ and $g_j \in \mathfrak{C}^1(K)$ for $j = 1,\dots,n$.
The proof of this is simple, but it is contained at the end of Section~\ref{section_cc} for completeness.
\end{remark}

Theorem \ref{main} can be reformulated using the Lie group structure of $\mathbb{H}^n$ as follows:

\begin{theorem}
\label{main_pansu}
Suppose $K \subset \mathbb{R}$ is compact. 
Suppose $\Gamma = (f_1,g_1,\dots,f_n,g_n,h) : K \to \mathbb{H}^n$ 
and $\Gamma' = (f_1',g_1',\dots,f_n',g_n',h') : K \to \mathbb{H}^n$
are continuous.

Then there is a horizontal, $C^1$ curve $\tilde{\Gamma} : \mathbb{R} \to \mathbb{H}^n$ such that
$\tilde{\Gamma}|_K = \Gamma$ and $\tilde{\Gamma}'|_K = \Gamma'$
if and only if 
\begin{equation}
\label{pansu}
\lim_{\substack{b-a \to 0^+ \\ a,b \in K}} \left| \delta_{(b-a)^{-1}} \left( \Gamma(a)^{-1} * \Gamma(b) \right) - \Gamma_0'(a) \right| =0
\end{equation}
where $\Gamma_0' = (f_1',g_1',\dots,f_n',g_n',0)$, and 
$$
h'(s)=2 \sum_{j=1}^n \left(f_j'(s)g_j(s)-g_j'(s)f_j(s) \right) \quad \text{for every } s \in K.
$$
\end{theorem}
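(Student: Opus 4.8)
The plan is to deduce Theorem~\ref{main_pansu} from Theorem~\ref{main} by rewriting condition \eqref{pansu} in coordinates. Recall from Section~\ref{sec_heisenberg} that the group operation on $\mathbb{H}^n = \mathbb{R}^{2n+1}$ has the form
$$
(x,y,t)*(x',y',t') = \bigl(x+x',\ y+y',\ t+t' + 2(\langle x',y\rangle - \langle x,y'\rangle)\bigr),
$$
that $(x,y,t)^{-1} = (-x,-y,-t)$, and that $\delta_r(x,y,t) = (rx, ry, r^2 t)$. First I would carry out the elementary computation of $\Gamma(a)^{-1} * \Gamma(b)$ and apply $\delta_{(b-a)^{-1}}$: for $a,b \in K$ with $a < b$, this produces the vector in $\mathbb{R}^{2n+1}$ whose entries in positions $2j-1$ and $2j$ are $\frac{f_j(b)-f_j(a)}{b-a}$ and $\frac{g_j(b)-g_j(a)}{b-a}$, and whose last entry is
$$
\frac{h(b)-h(a) - 2\sum_{j=1}^n(f_j(b)g_j(a) - f_j(a)g_j(b))}{(b-a)^2}.
$$
Since $\Gamma_0'(a) = (f_1'(a), g_1'(a), \dots, f_n'(a), g_n'(a), 0)$, the expression inside the limit in \eqref{pansu} therefore has entries $\frac{f_j(b)-f_j(a)}{b-a} - f_j'(a)$ and $\frac{g_j(b)-g_j(a)}{b-a} - g_j'(a)$ in positions $2j-1$ and $2j$, and the quotient displayed in \eqref{area} in its last position.

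Next I would verify that, in the presence of \eqref{horiz} and the standing continuity of $\Gamma$ and $\Gamma'$, condition \eqref{pansu} is equivalent to the hypothesis of Theorem~\ref{main} that $\Gamma \in \mathfrak{C}^1(K,\mathbb{R}^{2n+1})$ with Whitney derivative $\Gamma'$ together with \eqref{area}. A vector-valued limit vanishes if and only if each coordinate does, so \eqref{pansu} amounts to \eqref{area} plus the statements $\frac{f_j(b)-f_j(a)}{b-a} - f_j'(a) \to 0$ and $\frac{g_j(b)-g_j(a)}{b-a} - g_j'(a) \to 0$ as $b - a \to 0^+$ in $K$. The identity $\bigl|\tfrac{f_j(b)-f_j(a)}{b-a} - f_j'(a)\bigr| = \tfrac{|f_j(b)-f_j(a) - f_j'(a)(b-a)|}{|b-a|}$ immediately recognizes these (for $b > a$) as the Whitney condition \eqref{whitney2} for $f_j$ and $g_j$ with Whitney derivatives $f_j'$ and $g_j'$; the case $b < a$ of \eqref{whitney2} follows because swapping $a$ and $b$ changes the relevant quotient by $|f_j'(b) - f_j'(a)|$, which tends to $0$ by uniform continuity of $f_j'$ on $K$. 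Once $f_j, g_j \in \mathfrak{C}^1(K)$ and \eqref{area} hold, Remark~\ref{remark_} gives $h \in \mathfrak{C}^1(K)$, and the Taylor expansion behind that remark forces its Whitney derivative to be $2\sum_{j=1}^n(f_j' g_j - f_j g_j')$, which equals $h'$ precisely under \eqref{horiz}; hence $\Gamma \in \mathfrak{C}^1(K,\mathbb{R}^{2n+1})$ with Whitney derivative $\Gamma'$. Conversely, if $\Gamma \in \mathfrak{C}^1(K,\mathbb{R}^{2n+1})$ with Whitney derivative $\Gamma'$ and \eqref{area} holds, reading the same identities in the other direction recovers \eqref{pansu}.

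With this equivalence established, Theorem~\ref{main_pansu} follows from Theorem~\ref{main}: condition \eqref{horiz} occurs verbatim in both statements; in the forward direction a $C^1$ horizontal extension $\tilde{\Gamma}$ with $\tilde{\Gamma}|_K = \Gamma$ is the restriction of a $C^1$ map, hence lies in $\mathfrak{C}^1(K,\mathbb{R}^{2n+1})$ with Whitney derivative $\Gamma'$, and its horizontality on $\mathbb{R}$ gives \eqref{horiz} on $K$, so Theorem~\ref{main} yields \eqref{area} and the equivalence then yields \eqref{pansu}; in the reverse direction \eqref{pansu} and \eqref{horiz} give, via the equivalence, the hypotheses of Theorem~\ref{main}, which produces the extension. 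The only genuine obstacle I anticipate is bookkeeping --- pinning down the sign conventions in the group law of $\mathbb{H}^n$ so that the last coordinate of $\delta_{(b-a)^{-1}}(\Gamma(a)^{-1}*\Gamma(b))$ reproduces the numerator of \eqref{area} exactly, and carefully matching the one-sided limit in \eqref{pansu} with the two-sided Whitney limits. There is no new analytic content beyond what already appears in Theorem~\ref{main} and Remark~\ref{remark_}.
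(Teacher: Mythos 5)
Your proposal is correct and follows essentially the same route as the paper, which likewise establishes Theorem \ref{main_pansu} by expanding $\delta_{(b-a)^{-1}}\left(\Gamma(a)^{-1}*\Gamma(b)\right)$ in coordinates and observing that, under \eqref{horiz}, condition \eqref{pansu} is equivalent to $\Gamma \in \mathfrak{C}^1(K,\mathbb{R}^{2n+1})$ with Whitney derivative $\Gamma'$ together with \eqref{area}, so that the statement reduces to Theorem \ref{main}; you in fact supply more detail (the one-sided versus two-sided limit issue, and the identification of the Whitney derivative of $h$ via Remark \ref{remark_}) than the paper does. The only point worth adding is that for the vertical coordinate the numerator of \eqref{area} changes sign under swapping $a$ and $b$, so its one-sided vanishing in \eqref{pansu} automatically gives the two-sided condition \eqref{area}, just as you argued for $f_j$ and $g_j$.
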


Here, $\delta_{(b-a)^{-1}}$ is the Heisenberg dilation defined at the end of Section \ref{sec_heisenberg}.
After assuming \eqref{horiz} and rewriting \eqref{pansu} using the definitions of the group law and dilations, 
we see that \eqref{pansu} is satisfied if and only if \eqref{area} is true and $\Gamma$ is of Whitney class $\mathfrak{C}^1(K,\mathbb{R}^{2n+1})$ with Whitney derivative $\Gamma'$.
That is, Theorem \ref{main} and Theorem \ref{main_pansu} are indeed equivalent.
Notice the similarity between the formulation of \eqref{pansu} and the definition of the Pansu derivative (see \cite{monti, pansu} for information on Pansu differentiation).
In fact, Proposition \ref{necessary} implies that $\Gamma_0'$ may be viewed as a Whitney-Pansu derivative of $\Gamma$.
Thus \eqref{pansu} acts as a sort of Whitney-Pansu condition for mappings defined on compact subsets of $\mathbb{R}$.

In 2015, Speight \cite{gareth} showed that a horizontal curve $\Gamma: [a,b] \to \mathbb{H}^n$ coincides with a $C^1$, horizontal curve $\hat{\Gamma}$ on $[a,b]$ up to a set of arbitrarily small measure.
That is, he answered the $C^1$ Luzin approximation question posed by Haj\l{}asz in the positive.
After seeing the paper by Speight, I quickly realized that this $C^1$ Luzin result follows from Theorem \ref{main}.
This is summarized at the end of this paper in Corollary~\ref{corollary}.
Moreover, Speight showed the surprising result that the Luzin approximation does \emph{not} hold for curves in the Engel group.

There are clear paths for future work on the subjects addressed in this paper. 
For example, a version of Theorem \ref{main} with $k > 1$ will be explained in a forthcoming paper with Gareth Speight \cite{speightandme}.

The paper is organized as follows.
In Section~\ref{sec_heisenberg}, we introduce the relevant geometric and analytic properties of the Heisenberg group.
In Section~\ref{section_cc}, we prove Propositions \ref{counterexample} and \ref{necessary} and Remark \ref{remark_}, and
Section~\ref{sec_proof} contains the proof of Theorem \ref{main} 
along with a new proof of Speight's result regarding the $C^1$ Luzin approximation for horizontal curves in $\mathbb{H}^n$.

The author would like to extend his sincerest gratitude to his advisor Dr. Piotr Haj\l{}asz for introducing him to the problem of Whitney extensions in the Heisenberg group and for his time and assistance proofreading this paper.
The author would also like to thank the referee for their helpful suggestions which led to an improvement of the paper.

\section{The Heisenberg group $\mathbb{H}^n$}
\label{sec_heisenberg}

The Heisenberg group $\mathbb{H}^n$ is $\mathbb{R}^{2n+1}$ given the structure of a Lie group with multiplication
\begin{align*}
(x_1,y_1, \dots, x_n,y_n,t)&*(x_1',y_1', \dots, x_n',y_n',t') \\
& = 
\Big(x_1+x_1', y_1+y_1', \dots , x_n + x_n' , y_n+y_n', t+t'+2 \sum_{j=1}^n (x_j' y_j - x_j y_j')\Big)
\end{align*}
with Lie algebra $\mathfrak{g}$ whose basis of left invariant vector fields is
$$
X_j(p) = \frac{\partial}{\partial x_j} + 2y_j \frac{\partial}{\partial t}, 
\quad 
Y_j(p)= \frac{\partial}{\partial y_j} - 2x_j \frac{\partial}{\partial t}, 
\quad 
T=\frac{\partial}{\partial t},
\quad
j=1,2,\ldots,n
$$
at any $p=(x_1, y_1, \dots, x_n , y_n,t) \in \mathbb{H}^n$.
We call $H \mathbb{H}^n = \mathrm{span} \{ X_1,Y_1,\dots, X_n, Y_n \}$ the {\em horizontal distribution} on $\mathbb{H}^n$, 
and denote by $H_p \mathbb{H}^n$ the horizontal space at $p$.
An absolutely continuous curve $\Gamma:[a,b] \to \mathbb{R}^{2n+1}$ is said to be {\em horizontal} 
if $\Gamma'(s) \in H_{\Gamma(s)} \mathbb{H}^n$ for almost every $s \in [a,b]$.
It is easy to see that the horizontal distribution is the kernel of the {\em standard contact form}
$$
\alpha = dt+2 \sum_{j=1}^n(x_j dy_j - y_j dx_j).
$$
That is, $H_p \mathbb{H}^n = \text{ker} \, \alpha (p)$. 
Hence it follows that an absolutely continuous curve $\Gamma = (f_1,g_1, \dots, f_n, g_n,h) = (\gamma,h)$ is horizontal if and only if 
\begin{equation}
\label{horizontal}
h'(s)=2 \sum_{j=1}^n (f_j'(s) g_j(s) - f_j(s) g_j'(s))
\end{equation}
for almost every $s \in [a,b]$.
This means that
\begin{align}
h(s)-h(a) 
&= 2 \sum_{j=1}^n \int_a^s (f_j'(\tau) g_j(\tau) - f_j(\tau) g_j'(\tau)) \, d \tau \label{area1}\\
&= 2 \sum_{j=1}^n \int_{a}^{s} \omega(\gamma_j'(\tau),\gamma_j(\tau)) \, d \tau \nonumber
\end{align}
for every $s \in [a,b]$ where $\gamma_j = (f_j,g_j)$ and $\omega$ is the \emph{standard symplectic form} on $\mathbb{R}^2$ defined as 
$$
\omega((u_1,u_2),(v_1,v_2)) = u_1v_2-u_2v_1 \quad \forall \, (u_1,u_2),(v_1,v_2) \in \mathbb{R}^2.
$$
Notice that, if $\Gamma$ is additionally assumed to be $C^1$ on $[a,b]$, then the continuity of the derivative implies $\Gamma'(s) \in H_{\Gamma(s)} \mathbb{H}^n$ for \emph{every} $s \in [a,b]$.
Recall that a continuous curve $\Gamma:[a,b] \to \mathbb{R}^{2n+1}$ is $C^1$ on $[a,b]$ if it is $C^1$ on $(a,b)$ 
and if $\Gamma'$ can be continuously extended to $[a,b]$.
Equivalently, $\Gamma$ is $C^1$ on $[a,b]$ if it can be extended to a $C^1$ curve defined on $\mathbb{R}$.

Suppose $\gamma = (f_1,g_1\dots, f_n, g_n):[a,b] \to \mathbb{R}^{2n}$ is absolutely continuous. 
If a value for $h(a)$ is fixed, then \eqref{area1} gives a unique horizontal curve $\Gamma=(\gamma,h):[a,b] \to \mathbb{H}^n$ whose projection onto the 
first $2n$ coordinates equals $\gamma$. 
We call this curve $\Gamma$ the {\em horizontal lift} of $\gamma$ with starting height $h(a)$.

Finally, the Heisenberg group has a natural family of dilations $\delta_r: \mathbb{H}^n \to \mathbb{H}^n$ 
defined for any $r>0$ by the equation
$$
\delta_r (x_1,y_1,\dots,x_n,y_n,t) = (rx_1,ry_1,\dots,rx_n,ry_n,r^2t).
$$

\section{Proofs of Propositions \ref{counterexample} and \ref{necessary}} \label{section_cc}

We will first prove Proposition \ref{necessary} as this result is used in the proof of Proposition~\ref{counterexample}.

\begin{proof}[Proof of Proposition \ref{necessary}]

Since $K$ is compact, we may assume without loss of generality that $U$ is bounded.
It suffices to prove \eqref{necessaryeq} when $U$ is an open interval.
Indeed, $U = \bigcup_{i=1}^{\infty} (a^i,b^i)$ for disjoint intervals $(a^i,b^i)$.
Since $K$ is compact, $K \subset \bigcup_{i=1}^N (a^i,b^i)$ for some $N \in \mathbb{N}$, 
and so we are only required to prove \eqref{necessaryeq} on each $(a^i,b^i) \cap K$ with $i \leq N$.
We may also replace $K$ by a possibly larger compact interval (also called $K$) contained in the interval $U$.

Since $\Gamma$ is horizontal, we have that $h'=2\sum_{j=1}^n (f_j'g_j-f_jg_j')$ on $U$. 
Choose $M>0$ so that $|f_j'|<M$ and $|g_j'|<M$ on $K$ for every $j=1,\dots,n$. 
Fix $j \in \{ 1 , \dots, n \}$. 
For any $a,b \in K$ with $a<b$, 
we have $(a,b) \subset K$, and so

\begin{align*}
\int_a^{b} &f_j'(t)g_j(t) \, dt \\ 
&= \int_a^{b} f_j'(t)[g_j(a) + g_j'(a)(t-a) +g_j(t) - g_j(a) - g_j'(a)(t-a)] \, dt\\
&= g_j(a) \int_a^{b}f_j'(t) \, dt  + g_j'(a) \int_a^{b} f_j'(t) (t-a) \, dt + \int_a^{b} f_j'(t)[g_j(t) - g_j(a) - g_j'(a)(t-a)] \, dt.
\end{align*}
Now
$$
\frac{1}{(b-a)^2} \left| \int_a^{b} f_j'(t)[g_j(t) - g_j(a) - g_j'(a)(t-a)] \, dt \right| \leq \frac{M}{b-a} \int_a^{b} \frac{|g_j(t) - g_j(a) - g_j'(a)(t-a)|}{|t-a|} \, dt 
$$
which vanishes uniformly on $K$ as $|b-a| \to 0$ since $g_j$ is $C^1$.
In other words, $\int_a^{b} f_j'(t)[g_j(t) - g_j(a) - g_j'(a)(t-a)] \, dt = o(|b-a|^2)$ uniformly on $K$ as $|b-a| \to 0$.
Moreover
$$
\int_a^{b} f_j'(t) (t-a) \, dt = \int_a^{b} (f_j'(t)-f_j'(a)) (t-a) \, dt + \int_a^{b} f_j'(a)(t-a) \, dt.
$$
As above, we have $\int_a^{b} (f_j'(t)-f_j'(a)) (t-a) \, dt = o(|b-a|^2)$ uniformly on $K$ as $|b-a| \to 0$ since $f_j$ is $C^1$.
Thus we can write 
$$
\int_a^b f_j'(t)g_j(t) \, dt = [f_j(b)-f_j(a)]g_j(a) + g_j'(a)f_j'(a) \int_a^{b} (t-a) \, dt + o(|b-a|^2).
$$
Similar arguments yield
$$
\int_a^b g_j'(t)f_j(t) \, dt = [g_j(b)-g_j(a)]f_j(a) + f_j'(a)g_j'(a) \int_a^{b} (t-a) \, dt + o(|b-a|^2).
$$
Hence 
$$
\int_a^{b} (f_j'(t)g_j(t) - g_j'(t)f_j(t))\, dt =  f_j(b)g_j(a) - f_j(a)g_j(b) + o(|b-a|^2).
$$

Therefore,
\begin{align*}
h(b)-h(a) = \int_a^{b} h'(t) \, dt &= 2\sum_{j=1}^n \int_a^{b} (f_j'(t)g_j(t)-f_j(t)g_j'(t)) \, dt \\
&= 2\sum_{j=1}^n \left( f_j(b)g_j(a) - f_j(a)g_j(b) \right) + o(|b-a|^2)
\end{align*}
uniformly as $|b-a| \to 0$ for $a,b \in K$.
This completes the proof.
\end{proof}

As implied above, the following counterexample will fail to have a $C^1$, horizontal extension 
since any such extension would not satisfy the necessary condition outlined in Proposition \ref{necessary} on the compact set $K$.

\begin{proof}[Proof of Proposition \ref{counterexample}]
Let 
$$
K = \bigcup_{n=0}^{\infty} \left[ 1-\frac{1}{2^n} , 1 - \frac{3}{4} \cdot \frac{1}{2^{n}}  \right] \cup \{1\} =: \bigcup_{n=0}^{\infty}[c_n,d_n] \cup \{1\}.
$$
Then $K$ is compact. 
Define $\Gamma=(f,g,h):K \to \mathbb{H}^1$ so that, for each $n \in  \mathbb{N}\cup \{0\}$, 
$\Gamma(t) = (0,0,3^{-n})$ for $t \in [c_n,d_n]$, and set $\Gamma(1) = (0,0,0)$. 
Define $\Gamma'(t) = (0,0,0)$ for every $t \in K$.
We will show that 
\begin{equation}
\label{counterexamplegoal}
\frac{|\Gamma(b)-\Gamma(a) - (b-a)\Gamma'(a)|}{|b-a|}
\end{equation}
converges uniformly to 0 as $|b-a| \to 0$ on $K$.

Let $\varepsilon > 0$ and fix $n \in \mathbb{N}$ with $4(2/3)^n < \varepsilon$.
Suppose $a,b \in K$ with $|b-a|<2^{-(n+2)}$.
If $a$ and $b$ lie in the same interval $[c_k,d_k]$, then \eqref{counterexamplegoal} equals 0.
If $a$ and $b$ lie in different intervals $[c_k,d_k]$ and $[c_{\ell},d_{\ell}]$ for some $k,\ell \in \mathbb{N} \cup \{0\}$ (say $\ell > k$),
then $k \geq n$.
Indeed, if $k < n$, then 
$$
|b-a| \geq c_{\ell} - d_k \geq c_{k+1} - d_k = 2^{-(k+2)} > 2^{-(n+2)}
$$
which is impossible.
Hence, 
$$
\frac{|\Gamma(b)-\Gamma(a) - (b-a)\Gamma'(a)|}{|b-a|} 
\leq \frac{3^{-k}-3^{-{\ell}}}{c_{\ell}-d_k} 
\leq \frac{3^{-k}}{c_{k+1} - d_k}
= 4 \left( \frac{2}{3} \right)^k 
\leq 4 \left( \frac{2}{3} \right)^n
< \varepsilon.
$$
If either $a$ or $b$ equals 1 and the other point lies in the interval $[c_k,d_k]$ for some $k \in \mathbb{N} \cup \{0\}$, 
then, as in the above argument, $k \geq n$.
In this case, \eqref{counterexamplegoal} is bounded by $\frac{4}{3} \left( \frac{2}{3} \right)^n < \varepsilon$.
Thus $\Gamma \in \mathfrak{C}^1(K,\mathbb{R}^3)$, so there exists a $C^1$ extension of $\Gamma$ to all of $\mathbb{R}$.

Suppose now that $\tilde{\Gamma}=(\tilde{\gamma},\tilde{h}):\mathbb{R} \to \mathbb{H}^1$ is a $C^1$, horizontal extension of $\Gamma$. 
By Proposition~\ref{necessary}, 
$\tilde{\Gamma}$ must satisfy $|\tilde{h}(b)-\tilde{h}(a)|/|b-a|^2 \to 0$ uniformly on $K$ as $|b-a| \to 0$. 
However, $|c_{n+1} - d_n| = 2^{-(n+2)} \to 0$ as $n \to \infty$, but
$$
\frac{|h(c_{n+1})-h(d_n)|}{|c_{n+1}-d_n|^2} = \frac{3^{-n}-3^{-(n+1)}}{4^{-(n+2)} } = \frac{32}{3} \left( \frac{4}{3} \right)^n \to \infty
$$
as $n \to \infty$.
Thus $\Gamma$ has no $C^1$, horizontal extension to all of $\mathbb{R}$.
\end{proof}

We complete this section with the proof of Remark \ref{remark_}.

\begin{proof}
Let $K \subset \mathbb{R}$ be compact. 
Suppose $\Gamma = (f_1,g_1,\dots,f_n,g_n,h) : K \to \mathbb{H}^n$ 
satisfies \eqref{area}.
Suppose also that $f_j \in \mathfrak{C}^1(K)$ and $g_j \in \mathfrak{C}^1(K)$ for $j=1,\dots,n$
with Whitney derivatives $f_1',g_1', \dots, f_n',g_n'$.
Then
\begin{align*}
|h(b)-h(a)-(b-a)h'(a)| 
&\leq \left|h(b)-h(a)-2\sum_{j=1}^n(f_j(b)g_j(a) - f_j(a)g_j(b)) \right| \\
&\hspace{.3in} + 2\sum_{j=1}^n\big|f_j(b)g_j(a) - f_j(a)g_j(b) \\
&\hspace{1in} - (b-a)\left(f_j'(a)g_j(a) - f_j(a)g_j'(a) \right) \big| \\
&= o(|b-a|^2) + 2\sum_{j=1}^n \big|g_j(a) \left( f_j(b) - (b-a)f_j'(a) \right) \\
&\hspace{1.6in} - f_j(a) \left(g_j(b) - (b-a)g_j'(a) \right) \big| \\
&= o(|b-a|^2) + o(|b-a|)
\end{align*}
uniformly as $|b-a| \to 0$ for any $a,b \in K$.
That is, $h \in \mathfrak{C}^1(K)$.
\end{proof}

\section{Proof of the main result and the Luzin property}
\label{sec_proof}

\begin{proof} [Proof of Theorem \ref{main}]
Write $\Gamma = (\gamma, h) = (\gamma_1, \dots, \gamma_n, h)$ where $\gamma_j = (f_j,g_j):K \to \mathbb{R}^2$.

The necessity of conditions \eqref{area} and \eqref{horiz} was verified in Proposition \ref{necessary} 
and in the discussion preceding Proposition \ref{counterexample}.
We will now prove that these are sufficient conditions.

Since $K$ is compact, we can define the closed interval $I=[\min \{ K \}, \max \{ K \}]$. 
Thus $I \setminus K$ is open, so $I \setminus K = \bigcup_{i} (a^i,b^i)$ 
for pairwise disjoint open intervals $(a^i,b^i)$. 
To construct the extension $\tilde{\Gamma}$ of $\Gamma$, 
we will define a $C^1$ extension $\tilde{\gamma}$ of $\gamma$ on each interval $[a^i,b^i]$ 
so that the horizontal lift of $\tilde{\gamma}$ will coincide with $\Gamma$ on $K$.

If the collection $\{(a^i, b^i)\}_{i}$ is finite, then we can construct the extension directly. 
On each $[a^i,b^i]$ and for each $j \in \{ 1 ,\dots, n \}$ define $\tilde{\gamma}_j^i=(\tilde{f}_j^i,\tilde{g}_j^i):[a^i,b^i] \to \mathbb{R}^2$ to be a curve which is $C^1$ on $[a^i,b^i]$ satisfying 
\begin{equation}
\label{i}
\tilde{\gamma}_j^i(a^i) = \gamma_j(a^i) 
\quad \text{ and } \quad 
\tilde{\gamma}_j^i(b^i) = \gamma_j(b^i),
\end{equation}
\begin{equation}
\label{ii}
(\tilde{\gamma}_j^i)'(a^i) = \gamma_j'(a^i) 
\quad \text{ and } \quad 
(\tilde{\gamma}_j^i)'(b^i) = \gamma_j'(b^i),
\end{equation}
\begin{equation}
\label{iii}
2 \int_{a^i}^{b^i} \left( (\tilde{f}_j^i)' \tilde{g}_j^i - \tilde{f}_j^i (\tilde{g}_j^i)' \right) 
= \frac{1}{n} \left[ h(b^i)-h(a^i) \right].
\end{equation}

The fact that a curve exists satisfying the first two conditions is obvious.
The value on the right hand side of condition \eqref{iii} is fixed, 
and the integral on the left may be interpreted as an area via Green's theorem.
Thus any curve with prescribed values at $a^i$ and $b^i$ as in \eqref{i} and \eqref{ii} 
can be adjusted in $(a^i,b^i)$ without disturbing the curve at the endpoints
so that this integral condition \eqref{iii} is indeed satisfied.

Now define the curve $\tilde{\gamma}:I \to \mathbb{R}^{2n}$ so that 
$$
\tilde{\gamma}|_K = \gamma 
\quad \text{ and } \quad
\tilde{\gamma}|_{(a^i,b^i)} = (\tilde{\gamma}_1^i, \dots, \tilde{\gamma}_n^i)
$$
for every $i \in \mathbb{N}$.
The properties \eqref{i} and \eqref{ii} above ensure that $\tilde{\gamma}$ is $C^1$ on $I$.
Extend $\tilde{\gamma}$ to a $C^1$ curve on all of $\mathbb{R}$.
Finally, define $\tilde{\Gamma}=(\tilde{\gamma},\tilde{h})$ to be the unique horizontal lift of 
$\tilde{\gamma}$ so that $\tilde{h}(\min \{ K \}) = h(\min \{ K \})$.
Property \eqref{iii} ensures that this lift is a $C^1$ extension of $\Gamma$ since, on $[a^i,b^i]$, 
the horizontal lift traverses the distance $h(b^i) - h(a^i)$ in the vertical direction (as described in \eqref{area1}).

Now, consider the case when the collection $\{(a^i, b^i)\}$ is infinite.
The simple construction above can not in general be applied directly in this case.
Indeed, in the above construction, there was little control on the behavior of the curves.
For example, curves filling a small gap from $\gamma_j(a^i)$ to $\gamma_j(b^i)$ could be made arbitrarily long.
Thus we must now be more careful when constructing these curves.

Notice that the sequence $\{(a^i, b^i)\}_{i=1}^{\infty}$ satisfies $b^i - a^i \to 0$ as $i \to \infty$ since $I$ is bounded and the intervals are disjoint.
Thus, using the fact that each $f_j \in \mathfrak{C}^1(K)$ and $g_j \in \mathfrak{C}^1(K)$ and using \eqref{area}, we can find a non-increasing sequence $\varepsilon^i \to 0$ so that the following conditions hold for each $i \in \mathbb{N}$:
$$ 
b^i-a^i < \varepsilon^i, 
\hspace{.4in}
|\gamma(b^i)-\gamma(a^i)| < \varepsilon^i,
$$

$$
\left| \frac{\gamma(b^i)-\gamma(a^i)-(b^i-a^i)\gamma'(a^i)}{b^i-a^i} \right| < \varepsilon^i,
\hspace{.5in}
\left| \frac{\gamma(b^i)-\gamma(a^i)-(b^i-a^i)\gamma'(b^i)}{b^i-a^i} \right| < \varepsilon^i,
$$

$$
\frac{1}{n} \left| \frac{ h(b^i)-h(a^i) - \sum_{j=1}^n(f_j(b^i)g_j(a^i) - f_j(a^i)g_j(b^i))}{(b^i-a^i)^2} \right| < \varepsilon^i.
$$

Our plan for the proof will be as follows: 
for each $i \in \mathbb{N}$ we will construct a horizontal curve $\tilde{\Gamma}^i$ in $\mathbb{H}^n$ defined on $[a^i,b^i]$
connecting $\Gamma(a^i)$ to $\Gamma(b^i)$ and satisfying conditions \eqref{i}, \eqref{ii}, and \eqref{iii}.
In addition, the curves will be constructed in a controlled way so that the concatenation of all of these curves 
creates a $C^1$, horizontal extension of $\Gamma$.
To create these curves in $\mathbb{H}^n$, we will first define for each $i \in \mathbb{N}$ curves 
$\tilde{\gamma}_j^i:[a^i,b^i] \to \mathbb{R}^2$ in each $x_j y_j$-plane
connecting $\gamma_j(a^i)$ to $\gamma_j(b^i)$.
Horizontally lifting each curve $\tilde{\gamma}^i = (\tilde{\gamma}_1^i, \dots, \tilde{\gamma}_n^i)$
to $\tilde{\Gamma}^i$ will create an extension $\tilde{\Gamma}:I \to \mathbb{H}^n$ of $\Gamma$.
The controlled construction of the curves $\tilde{\gamma}_j^i$ together with \eqref{iii} will ensure that 
this extension $\tilde{\Gamma}$ is indeed $C^1$.


We begin with the following lemma in which we define a curve $\eta_j^i$ from $[a^i,b^i]$ into the $x_jy_j$-plane 
sending $a^i$ to the origin and $b^i$ to $\left( |\gamma_j(b^i)-\gamma_j(a^i)|,0 \right)$
Later, we will compose the curves in the lemma with planar rotations and translations to create the curves $\tilde{\gamma}_j^i$ connecting $\gamma_j(a^i)$ to $\gamma_j(b^i)$ as described above.

We now introduce some notation.
Fix $j \in \{1 , \dots , n \}$.
For each $i \in \mathbb{N}$, if $|\gamma_j(b^i)-\gamma_j(a^i)| > 0$, let $\mathbf{u}_j^i=\frac{\gamma_j(b^i)-\gamma_j(a^i)}{|\gamma_j(b^i)-\gamma_j(a^i)|}$ and let $\mathbf{v}_j^i$ be the unit vector perpendicular to $\mathbf{u}_j^i$ given by a counter-clockwise rotation of $\mathbf{u}_j^i$ in the $x_jy_j$-plane.
If $|\gamma_j(b^i)-\gamma_j(a^i)| = 0$, define $\mathbf{u}_j^i$ and $\mathbf{v}_j^i$ to be the unit vectors pointing in the $x_j$ and $y_j$ coordinate directions respectively.
Since each $\gamma_j$ is of Whitney class $\mathfrak{C}^1(K,\mathbb{R}^2)$, we may choose $M>0$ so that $\frac{|\gamma_j(b^i)-\gamma_j(a^i)|}{|b^i - a^i|} < M$, $|\gamma_j'(a^i)|<M$, and $|\gamma_j'(b^i)| <M$ for every $i \in \mathbb{N}$ and every $j=1,\dots,n$.

\begin{lemma}
Fix $i \in \mathbb{N}$ and $j \in \{ 1 ,\dots n \}$.
There exists a $C^1$ curve $\eta_j^i=(x_j^i,y_j^i):[a^i,b^i] \to \mathbb{R}^2$ satisfying
\begin{equation}
\label{cond_point}
\eta_j^i(a^i) = (0,0) 
\quad \text{ and } \quad 
\eta_j^i(b^i) = \left( |\gamma_j(b^i) - \gamma_j(a^i)| , 0 \right),
\end{equation}
\begin{equation}
\label{cond_deriv}
(\eta_j^i)'(a^i) = (\gamma_j'(a^i) \cdot \mathbf{u}_j^i, \gamma_j'(a^i) \cdot \mathbf{v}_j^i) 
\quad \text{ and } \quad 
(\eta_j^i)'(b^i) = (\gamma_j'(b^i) \cdot \mathbf{u}_j^i,\gamma_j'(b^i) \cdot \mathbf{v}_j^i),
\end{equation}
\begin{equation}
\label{cond_beta}
||\eta_j^i||_{\infty} < P(\varepsilon^i)
\quad \text{ and } \quad 
||(\eta_j^i)' - (\gamma_j'(a^i) \cdot \mathbf{u}_j^i, \gamma_j'(a^i) \cdot \mathbf{v}_j^i)||_{\infty} < P(\varepsilon^i)
\end{equation}
where $P(t) = C'( t^{1/2} 
+ t^2)$ for every $t \geq 0$ and some constant $C' \geq 0$ depending only on $M$, and
\begin{equation}
\label{cond_area}
2 \int_{a^i}^{b^i} \left((x_j^i)' y_j^i - x_j^i (y_j^i)' \right) = \frac{1}{n} \left[ h(b^i)-h(a^i) - 2\sum_{m=1}^n \left(f_m(b^i)g_m(a^i) - f_m(a^i)g_m(b^i) \right) \right].
\end{equation}
\end{lemma}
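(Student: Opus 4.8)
The plan is to construct $\eta_j^i$ explicitly as a perturbation of a short ``default'' curve, adjusting only an area term at the end. First I would build a base curve $\sigma$ on $[a^i,b^i]$ that is $C^1$, hits the prescribed endpoints \eqref{cond_point} and prescribed endpoint derivatives \eqref{cond_deriv}, and stays small. The natural choice is a cubic Hermite interpolant: since the two endpoint values differ by a vector of length $|\gamma_j(b^i)-\gamma_j(a^i)| \le M(b^i-a^i) \le M\varepsilon^i$ and the two endpoint derivatives have norm $<M$ (note $(\eta_j^i)'(a^i)$ and $(\eta_j^i)'(b^i)$ are just rotations of $\gamma_j'(a^i)$, $\gamma_j'(b^i)$, hence still bounded by $M$), a standard estimate on Hermite cubics gives $\|\sigma\|_\infty \lesssim_M \varepsilon^i$ and $\|\sigma' - (\eta_j^i)'(a^i)\|_\infty \lesssim_M \varepsilon^i$ (here I also use that $|(\eta_j^i)'(b^i) - (\eta_j^i)'(a^i)| = o(1)$, which follows from $\gamma_j \in \mathfrak C^1$ and $b^i-a^i\to 0$, or more crudely is $\le 2M$; a linear-in-$\varepsilon^i$ bound is enough). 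So $\sigma$ already satisfies \eqref{cond_point}, \eqref{cond_deriv}, and the bounds \eqref{cond_beta} with room to spare, since $P(t)=C'(t^{1/2}+t^2)$ dominates a constant times $t$ for small $t$ and the $t^{1/2}$ term gives slack.

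The remaining issue is the signed-area condition \eqref{cond_area}: the quantity $2\int_{a^i}^{b^i}((x_j^i)'y_j^i - x_j^i(y_j^i)')$ is twice the signed area enclosed by the curve together with the chord, and $\sigma$ will in general produce the wrong value. Call the target value $A^i$; by the last of the listed smallness conditions on $\varepsilon^i$ we have $|A^i| < \varepsilon^i (b^i-a^i)^2 \le (\varepsilon^i)^3$, so $A^i$ is very small. I would correct the area by adding a small ``loop'': pick a bump function supported in a subinterval of $(a^i,b^i)$ and add to $\sigma$ a term of the form $\lambda\,\big(\phi_1(t),\phi_2(t)\big)$ engineered so that it vanishes to first order at both endpoints (so \eqref{cond_point} and \eqref{cond_deriv} are untouched) and so that the enclosed area of the perturbed curve depends on the parameter $\lambda$ in a controlled, continuous, surjective-onto-a-neighborhood-of-$0$ way. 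Concretely, after rescaling to a fixed interval, one checks the area is a quadratic polynomial in $\lambda$ with a nonzero quadratic coefficient (a genuine loop encloses nonzero area), so any target area of size $\le (\varepsilon^i)^3$ is attained with $|\lambda| \lesssim (\varepsilon^i)^{3/2}$ after accounting for the length of the interval $b^i-a^i$, which contributes another factor; a careful bookkeeping gives a correction of sup-norm $\lesssim_M (\varepsilon^i)^{1/2}$ and derivative perturbation $\lesssim_M (\varepsilon^i)^{1/2}$. This is exactly why $P(t)$ carries a $t^{1/2}$ term.

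Putting it together: set $\eta_j^i = \sigma + (\text{area correction})$. Then \eqref{cond_point} and \eqref{cond_deriv} hold exactly, \eqref{cond_area} holds by the choice of $\lambda$, and \eqref{cond_beta} holds because the base curve contributes $O_M(\varepsilon^i)$ and the correction contributes $O_M((\varepsilon^i)^{1/2})$, both absorbed into $P(\varepsilon^i) = C'((\varepsilon^i)^{1/2} + (\varepsilon^i)^2)$ for a suitable $C'$ depending only on $M$. The main obstacle is the area-correction step: one must verify that the perturbation can always be chosen (i) to leave the endpoint data \eqref{cond_point}–\eqref{cond_deriv} exactly intact, (ii) to realize \emph{any} signed area in the required range including both signs, and (iii) with sup-norm and derivative bounds that scale like a power of $\varepsilon^i$ rather than a constant — the scaling analysis under the interval length $b^i-a^i$ (which is itself $O(\varepsilon^i)$ but not bounded below) is where the bookkeeping is most delicate, and it is what forces the precise exponent $1/2$ in $P$. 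The degenerate case $|\gamma_j(b^i)-\gamma_j(a^i)|=0$ needs no special treatment since the construction only used the \emph{bound} $|\gamma_j(b^i)-\gamma_j(a^i)| \le M(b^i-a^i)$, which still holds.
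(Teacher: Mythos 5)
Your plan differs from the paper's proof in structure: the paper splits into two cases according to the size of a certain denominator (roughly, whether the tangential data $\alpha+\beta-9\ell/\delta$, with $\ell=|\gamma_j(b^i)-\gamma_j(a^i)|$ and $\delta=b^i-a^i$, exceeds $\sqrt{\varepsilon^i}$), writing explicit cubic/quartic coordinate polynomials in the generic case and, in the degenerate case, cubic end pieces glued to a small circular arc whose radius and orientation supply exactly the missing area; you propose instead a single uniform construction, a Hermite cubic plus an additive loop whose amplitude $\lambda$ is tuned to fix \eqref{cond_area}. That route is viable, and the first half is fine, but only if you justify the cubic bounds with the $\varepsilon^i$-scale Whitney estimates at $a^i,b^i$ (both $|\gamma_j'(b^i)-\gamma_j'(a^i)|<2\varepsilon^i$ and $\bigl|\,\ell/\delta-\gamma_j'(a^i)\cdot\mathbf{u}_j^i\bigr|<\varepsilon^i$, etc., which follow from the conditions listed before the lemma); your parenthetical fallback ``or more crudely $\le 2M$'' cannot work, since \eqref{cond_beta} forces the derivative error to tend to $0$, not merely to be $O(M)$.

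The genuine gap is the area-correction bookkeeping, which is where the lemma actually lives and which you list as ``to be verified'' rather than verify. Write $\sigma=(\sigma_1,\sigma_2)$ for the base cubic, $\phi=(\phi_1,\phi_2)$ for the model bump rescaled into a subinterval of length $\sim\delta$, and $T$ for the area the loop must supply. First, your estimate discards $\delta^2$ too early: from $|T|\le(\varepsilon^i)^3$ you get $|\lambda|\lesssim(\varepsilon^i)^{3/2}$, but the derivative perturbation is $\sim|\lambda|\,\|\phi'\|_\infty/\delta$, and $\delta$ is only known to satisfy $\delta<\varepsilon^i$ — it may be far smaller, so $(\varepsilon^i)^{3/2}/\delta$ is not controlled. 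You must keep the target in the form $|T|\lesssim_M\varepsilon^i\delta^2$, which also requires bounding the base curve's own signed area by $C_M\varepsilon^i\delta^2$ (using $\|\sigma_2'\|_\infty\lesssim\varepsilon^i$ and $\|\sigma_2\|_\infty\lesssim\varepsilon^i\delta$, not just $\|\sigma\|_\infty\lesssim M\delta$). Second, the area of $\sigma+\lambda\phi$ is $a\lambda^2+b\lambda+A(\sigma)$ where the cross term $b$ is generically of size $M\delta$ (since $\sigma_1'\approx\ell/\delta$ can be of order $M$), so ``nonzero quadratic coefficient'' alone gives neither both signs nor your size of $\lambda$: you need to choose the loop's orientation so that $aT\ge0$ (making the roots real with product $-T/a$) and take the root of magnitude at most $\sqrt{|T|/|a|}\lesssim\sqrt{\varepsilon^i}\,\delta$, with $|a|$ the reparametrization-invariant area of the model loop, of order one — or else normalize $\phi$ to kill the cross term. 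With these repairs the derivative error is $\lesssim_M\sqrt{\varepsilon^i}$, matching the $t^{1/2}$ term of $P$, and your construction proves the lemma in the same spirit as the paper's circular-arc case but without its case split; as written, however, the scaling argument does not close.
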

The proof of this lemma is omitted here for continuity. It is presented in the appendix.

\begin{remark}
\label{remark}
\normalfont
Observe that $\gamma_j'(a^i) \cdot \mathbf{u}_j^i$ and $\gamma_j'(a^i) \cdot \mathbf{v}_j^i$ are the components of the vector
$\gamma_j'(a^i)$ in the orthonormal basis $\langle \mathbf{u}_j^i, \mathbf{v}_j^i \rangle$.
Soon, we will define the curve $\tilde{\gamma}_j^i$ by moving $\eta_j^i$ via a rotation and translation.
The rotation will map the standard basis in the $x_jy_j$-plane to $\langle \mathbf{u}_j^i, \mathbf{v}_j^i \rangle$,
and hence \eqref{cond_deriv} will imply 
$(\tilde{\gamma}_j^i)'(a^i) = \gamma_j'(a^i)$ and $(\tilde{\gamma}_j^i)'(b^i) = \gamma_j'(b^i)$.
The translation will map the segment connecting the origin and $(|\gamma_j(b^i)-\gamma_j(a^i)|,0)$ 
to the segment $\overline{\gamma_j(a^i)\gamma_j(b^i)}$, 
and so \eqref{cond_point} will give 
$\tilde{\gamma}_j^i(a^i) = \gamma_j(a^i)$ and $\tilde{\gamma}_j^i(b^i) = \gamma_j(b^i)$.
Condition \eqref{cond_beta} exhibits control on the $C^1$ norm of $\eta_j^i$ 
and will thus give us control on the $C^1$ norm of its isometric image $\tilde{\gamma}_j^i$.
Note also that the integral condition \eqref{cond_area} seems more complicated than \eqref{iii}.
However, after rotating and translating $\eta_j^i$, \eqref{cond_area} will reduce to \eqref{iii}.
\end{remark}

Fix $j \in \{1 \dots, n \}$ and $i \in \mathbb{N}$.
Define the curve $\eta_j^i:[a^i,b^i] \to \mathbb{R}^2$ as in the lemma.
Define the isometry $\Phi_j^i: \mathbb{R}^{2} \to \mathbb{R}^{2}$ as $\Phi_j^i(p) = A_j^i p + c_j^i$ for $p \in \mathbb{R}^2$
where $c_j^i = (f_j(a^i), g_j(a^i))$ and
$$
A_j^i = \frac{1}{|\gamma_j(b^i)-\gamma_j(a^i)|}
\left( \begin{array}{cc}
f_j(b^i)-f_j(a^i) & -(g_j(b^i)-g_j(a^i))\\
g_j(b^i)-g_j(a^i) & f_j(b^i)-f_j(a^i)\\ \end{array} \right)
$$
when $|\gamma_j(b^i)-\gamma_j(a^i)| \neq 0$ and $A_j^i = I_{2 \times 2}$ if $|\gamma_j(b^i)-\gamma_j(a^i)|=0$.
That is, $\Phi_j^i$ is the isometry described in Remark \ref{remark}.
When $\gamma_j(a^i)=\gamma_j(b^i)$, $\Phi_j^i$ is simply a translation sending the origin to $\gamma_j(a^i)$ without any rotation.
Now define $\tilde{\gamma}_j^i = \Phi_j^i \circ \eta_j^i: [a^i,b^i] \to \mathbb{R}^2$.
Hence $\tilde{\gamma}_j^i$ is a $C^1$ curve in $\mathbb{R}^2$ connecting $\gamma_j(a^i)$ to $\gamma_j(b^i)$.

Write $\tilde{\gamma}^i = (\tilde{\gamma}_1^i, \dots, \tilde{\gamma}_n^i): [a^i,b^i] \to \mathbb{R}^{2n}$.
Now, define $\tilde{\Gamma}^i:[a^i,b^i] \to \mathbb{H}^n$ to be the unique horizontal lift of $\tilde{\gamma}^i$ 
with starting height $h(a^i)$.
The resulting lift is $C^1$ on $[a^i,b^i]$ by definition.
Define $\tilde{\Gamma}:I \to \mathbb{H}^n$ so that 
$$
\tilde{\Gamma}|_K = \Gamma \quad \text{ and } \quad \tilde{\Gamma}|_{(a^i,b^i)} = \tilde{\Gamma}^i
$$ 
for each $i \in \mathbb{N}$.
Write $\tilde{\Gamma} = (\tilde{\gamma},\tilde{h}) = (\tilde{\gamma}_1, \dots, \tilde{\gamma}_n, \tilde{h})$ where 
$\tilde{\gamma}_j = (\tilde{f}_j, \tilde{g}_j)$ for each $j \in \{ 1 ,\dots , n \}$.
It remains to show that $\tilde{\Gamma}$ is $C^1$ on all of $I$.
Notice that we do not yet know if $\tilde{\Gamma}$ is even continuous.

\begin{claim}
\label{integral}
For each $i \in \mathbb{N}$, $\int_{a^i}^{b^i} \tilde{h}' = h(b^i) - h(a^i)$.
\end{claim}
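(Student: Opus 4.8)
The plan is to compute $\int_{a^i}^{b^i} \tilde h'$ directly using the horizontality of the lift $\tilde\Gamma^i$ and then show that the rotation and translation $\Phi_j^i$ applied to $\eta_j^i$ preserves the symplectic-area integral up to exactly the correction term appearing in \eqref{cond_area}. Since $\tilde\Gamma^i = (\tilde\gamma^i, \tilde h)$ is by construction the horizontal lift of $\tilde\gamma^i = (\tilde\gamma_1^i,\dots,\tilde\gamma_n^i)$, formula \eqref{area1} gives
$$
\tilde h(b^i) - \tilde h(a^i) = \int_{a^i}^{b^i}\tilde h' = 2\sum_{j=1}^n \int_{a^i}^{b^i}\big((\tilde f_j^i)'\tilde g_j^i - \tilde f_j^i(\tilde g_j^i)'\big),
$$
so it suffices to evaluate each planar integral $2\int_{a^i}^{b^i}\big((\tilde f_j^i)'\tilde g_j^i - \tilde f_j^i(\tilde g_j^i)'\big)$ for $\tilde\gamma_j^i = \Phi_j^i\circ\eta_j^i$.

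The key computation is a change-of-variables lemma for the symplectic form under an orientation-preserving isometry. Writing $\eta_j^i = (x_j^i,y_j^i)$ and $\tilde\gamma_j^i(t) = A_j^i\eta_j^i(t) + c_j^i$ with $c_j^i = (f_j(a^i),g_j(a^i))$ and $A_j^i$ a rotation matrix (so $\det A_j^i = 1$), I would expand $(\tilde f_j^i)'\tilde g_j^i - \tilde f_j^i(\tilde g_j^i)'$ in terms of $x_j^i, y_j^i$ and their derivatives. The rotation contributes a factor $\det A_j^i = 1$ to the genuine area term $2\int ((x_j^i)'y_j^i - x_j^i(y_j^i)')$, while the translation by $c_j^i$ contributes the extra linear term; explicitly, writing $c_j^i = (p,q)$ one gets cross terms of the form $2\int\big(q\,(A_j^i\eta_j^i)_1' - p\,(A_j^i\eta_j^i)_2'\big)$ which integrate to boundary contributions $2\big(q(\tilde f_j^i(b^i) - \tilde f_j^i(a^i)) - p(\tilde g_j^i(b^i) - \tilde g_j^i(a^i))\big) = 2\big(g_j(a^i)(f_j(b^i)-f_j(a^i)) - f_j(a^i)(g_j(b^i)-g_j(a^i))\big) = 2\big(f_j(b^i)g_j(a^i) - f_j(a^i)g_j(b^i)\big)$, using \eqref{cond_point} transported by $\Phi_j^i$ (equivalently the endpoint conditions $\tilde\gamma_j^i(a^i)=\gamma_j(a^i)$, $\tilde\gamma_j^i(b^i)=\gamma_j(b^i)$ from Remark~\ref{remark}). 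Combining this with \eqref{cond_area}, which fixes $2\int ((x_j^i)'y_j^i - x_j^i(y_j^i)')$ to equal $\frac1n\big[h(b^i)-h(a^i) - 2\sum_m(f_m(b^i)g_m(a^i) - f_m(a^i)g_m(b^i))\big]$, gives
$$
2\int_{a^i}^{b^i}\big((\tilde f_j^i)'\tilde g_j^i - \tilde f_j^i(\tilde g_j^i)'\big) = \frac1n\big[h(b^i)-h(a^i)\big] - \frac{2}{n}\sum_{m=1}^n\big(f_m(b^i)g_m(a^i)-f_m(a^i)g_m(b^i)\big) + 2\big(f_j(b^i)g_j(a^i)-f_j(a^i)g_j(b^i)\big).
$$
Summing over $j = 1,\dots,n$, the two $m$-sums cancel and the result is exactly $h(b^i) - h(a^i)$, which is the claim. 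This also verifies condition \eqref{iii} in the sense advertised in the last line of Remark~\ref{remark}.

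The main obstacle — really the only nontrivial point — is carrying out the symplectic change-of-variables cleanly: one must be careful that the bilinear (area) part of $(\tilde f_j^i)'\tilde g_j^i - \tilde f_j^i(\tilde g_j^i)'$ picks up precisely $\det A_j^i$ and no spurious terms, and that the mixed linear-times-$c_j^i$ part is an exact derivative so that it depends only on the endpoints $\eta_j^i(a^i)$ and $\eta_j^i(b^i)$. Once the identity $\omega\big((Av+c)', (Av+c)\big) = (\det A)\,\omega(v',v) + \omega\big((Av)', c\big)$ (integrated) is recorded, the rest is bookkeeping. The degenerate case $\gamma_j(a^i) = \gamma_j(b^i)$, where $A_j^i = I$ and $\Phi_j^i$ is a pure translation, is handled by the same formula since then $f_j(b^i)g_j(a^i) - f_j(a^i)g_j(b^i) = 0$ and the boundary term vanishes, so no separate argument is needed.
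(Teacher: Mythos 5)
Your proposal is correct and follows essentially the same route as the paper: both reduce to the per-$j$ identity $2\int_{a^i}^{b^i}\omega\big((\Phi_j^i\circ\eta_j^i)',\Phi_j^i\circ\eta_j^i\big)=2\big(f_j(b^i)g_j(a^i)-f_j(a^i)g_j(b^i)\big)+2\int_{a^i}^{b^i}\big((x_j^i)'y_j^i-x_j^i(y_j^i)'\big)$, then invoke \eqref{cond_area} and sum over $j$ so the cross terms cancel. The only cosmetic difference is that you package the computation as invariance of $\omega$ under the rotation plus a boundary term from the translation, whereas the paper expands in coordinates and uses $\int (x_j^i)'=|\gamma_j(b^i)-\gamma_j(a^i)|$, $\int (y_j^i)'=0$ (and treats the degenerate case $\gamma_j(a^i)=\gamma_j(b^i)$ separately, which your formula subsumes).
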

Fix $j \in \{ 1, \dots, n \}$ and suppose $|\gamma_j(b^i) - \gamma_j(a^i)| \neq 0$.
We have
$$
(\Phi_j^i \circ \eta_j^i) =
\left( \begin{array}{c}
x_j^i \frac{f_j(b^i)-f_j(a^i)}{|\gamma_j(b^i) - \gamma_j(a^i)|} 
- y_j^i \frac{g_j(b^i)-g_j(a^i)}{|\gamma_j(b^i) - \gamma_j(a^i)|} + f_j(a^i)\\
x_j^i \frac{g_j(b^i)-g_j(a^i)}{|\gamma_j(b^i) - \gamma_j(a^i)|} 
+ y_j^i \frac{f_j(b^i)-f_j(a^i)}{|\gamma_j(b^i) - \gamma_j(a^i)|} + g_j(a^i)
\end{array} \right)
$$
where $\eta_j^i = (x_j^i, y_j^i)$. This gives
\begin{align*}
\omega(\tilde{\gamma}_j',\tilde{\gamma}_j) &= \omega((\Phi_j^i \circ \eta_j^i)',(\Phi_j^i \circ \eta_j^i)) \\
&= g_j(a^i) \left((x_j^i)' \frac{f_j(b^i)-f_j(a^i)}{|\gamma_j(b^i) - \gamma_j(a^i)|} - (y_j^i)' \frac{g_j(b^i)-g_j(a^i)}{|\gamma_j(b^i) - \gamma_j(a^i)|} \right) \\
&-f_j(a^i) \left((x_j^i)' \frac{g_j(b^i)-g_j(a^i)}{|\gamma_j(b^i) - \gamma_j(a^i)|} + (y_j^i)' \frac{f_j(b^i)-f_j(a^i)}{|\gamma_j(b^i) - \gamma_j(a^i)|} \right) \\
&+\left( \left(\frac{f_j(b^i)-f_j(a^i)}{|\gamma_j(b^i) - \gamma_j(a^i)|} \right)^2 + \left( \frac{g_j(b^i)-g_j(a^i)}{|\gamma_j(b^i) - \gamma_j(a^i)|} \right)^2 \right) ((x_j^i)' y_j^i - x_j^i (y_j^i)').
\end{align*}
Now since the constructions in the lemma give 
$$
\int_{a^i}^{b^i} (y_j^i)' = y_j^i(b^i)-y_j^i(a^i) = 0 \text{ and } \int_{a^i}^{b^i} (x_j^i)' = x_j^i(b^i)-x_j^i(a^i) = |\gamma_j(b^i) - \gamma_j(a^i)|
$$
and
$$
\left( \frac{f_j(b^i)-f_j(a^i)}{|\gamma_j(b^i) - \gamma_j(a^i)|} \right)^2 + \left( \frac{g_j(b^i)-g_j(a^i)}{|\gamma_j(b^i) - \gamma_j(a^i)|}\right)^2 = \left( \frac{|\gamma_j(b^i) - \gamma_j(a^i)|}{|\gamma_j(b^i) - \gamma_j(a^i)|}\right)^2 = 1,
$$
we have 
$$
2\int_{a^i}^{b^i} \omega((\Phi_j^i \circ \eta_j^i)',(\Phi_j^i \circ \eta_j^i)) 
= 2(f_j(b^i)g_j(a^i)-f_j(a^i)g_j(b^i)) +  2\int_{a^i}^{b^i} \left((x_j^i)' y_j^i - x_j^i (y_j^i)' \right).
$$
By condition (\ref{cond_area}), we have
$$
2 \int_{a^i}^{b^i} \left((x_j^i)' y_j^i - x_j^i (y_j^i)' \right) = \frac{1}{n} \left[ h(b^i)-h(a^i) - 2\sum_{m=1}^n (f_{m}(b^i)g_{m}(a^i) - f_{m}(a^i)g_{m}(b^i)) \right],
$$
thus
\begin{align*}
\int_{a^i}^{b^i} \tilde{h}' &= 2 \sum_{j=1}^n \int_{a^i}^{b^i} \omega((\Phi_j^i \circ \eta_j^i)',(\Phi_j^i \circ \eta_j^i)) \\
&= \sum_{j=1}^n \Bigg[ 2 \left( f_j(b^i)g_j(a^i)-f_j(a^i)g_j(b^i) \right) \\
& \hspace{.7in} + \frac{1}{n} \Bigg[ h(b^i)-h(a^i)  - 2\sum_{{m}=1}^n (f_{m}(b^i)g_{m}(a^i) - f_{m}(a^i)g_{m}(b^i)) \Bigg] \Bigg] \\
&= h(b^i) - h(a^i).
\end{align*}

If $|\gamma_j(b^i) - \gamma_j(a^i)| = 0$, then $A^i$ is the identity.
Thus 
\begin{align*}
\int_{a^i}^{b^i} \tilde{h}' 
= 2 \sum_{j=1}^{n} \int_{a^i}^{b^i} \left[ (x_j^i)'(y_j^i + g_j(a^i)) - (y_j^i)'(x_j^i + f_j(a^i)) \right]
&= 2 \sum_{j=1}^{n} \int_{a^i}^{b^i} \left( (x_j^i)'y_j^i - (y_j^i)'x_j^i \right) \\
&= h(b^i) - h(a^i)
\end{align*}
since $f_j(a^i) = f_j(b^i)$ and $g_j(a^i) = g_j(b^i)$ in this case.
This completes the proof of the claim.

\begin{claim}
\label{cont}
$\sup_{s \in [a^i,b^i]} |\tilde{\gamma}(s) - \gamma(a^i)| \to 0$ as $i \to \infty$.
\end{claim}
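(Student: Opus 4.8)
The plan is to reduce the claim directly to the uniform bound \eqref{cond_beta} on the auxiliary curves $\eta_j^i$, exploiting that each map $\Phi_j^i$ is an isometry of $\mathbb{R}^2$ sending the origin to $\gamma_j(a^i)$. Note first that $\tilde{\gamma}$ restricted to $[a^i,b^i]$ is exactly $\tilde{\gamma}^i = (\tilde{\gamma}_1^i,\dots,\tilde{\gamma}_n^i)$, the endpoint values agreeing by \eqref{cond_point} and the construction of $\Phi_j^i$, so it suffices to estimate $|\tilde{\gamma}^i(s) - \gamma(a^i)|$ for $s \in [a^i,b^i]$.

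First I would fix $i \in \mathbb{N}$ and $j \in \{1,\dots,n\}$ and write $\Phi_j^i(p) = A_j^i p + \gamma_j(a^i)$ with $A_j^i$ orthogonal (in the degenerate case $|\gamma_j(b^i)-\gamma_j(a^i)|=0$ one has $A_j^i = I_{2\times 2}$, which is still orthogonal). Since $\eta_j^i(a^i) = 0$ by \eqref{cond_point}, for every $s \in [a^i,b^i]$,
\[
|\tilde{\gamma}_j^i(s) - \gamma_j(a^i)| = |\Phi_j^i(\eta_j^i(s)) - \Phi_j^i(0)| = |A_j^i \eta_j^i(s)| = |\eta_j^i(s)| \le \|\eta_j^i\|_{\infty} < P(\varepsilon^i),
\]
the last inequality being precisely \eqref{cond_beta}. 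Summing over $j$ gives, for every $s \in [a^i,b^i]$,
\[
|\tilde{\gamma}(s) - \gamma(a^i)|^2 = \sum_{j=1}^n |\tilde{\gamma}_j^i(s) - \gamma_j(a^i)|^2 < n\,P(\varepsilon^i)^2,
\]
so that $\sup_{s \in [a^i,b^i]} |\tilde{\gamma}(s) - \gamma(a^i)| \le \sqrt{n}\,P(\varepsilon^i)$. Since $\varepsilon^i \to 0$ as $i \to \infty$ and $P(t) = C'(t^{1/2}+t^2)$ is continuous with $P(0)=0$, the right-hand side tends to $0$, which is the claim.

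The key observation — and really the only thing to verify — is that passing from $\eta_j^i$ to its rigid image $\tilde{\gamma}_j^i = \Phi_j^i \circ \eta_j^i$ preserves the size bound, because isometries preserve distances; all the genuine work (producing a short curve with the prescribed boundary data and prescribed signed area) has already been absorbed into the lemma. I do not anticipate any real obstacle here; the only point to be stated carefully is that the estimate is anchored at the planar point $\gamma(a^i)$ rather than at a point of $\mathbb{H}^n$, since at this stage $\tilde{\Gamma}$ is not yet known to be continuous.
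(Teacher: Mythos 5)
Your argument is correct and is essentially the paper's own proof: both reduce the estimate to $|\tilde{\gamma}_j(s)-\gamma_j(a^i)| = |\eta_j^i(s)| < P(\varepsilon^i)$ using that $\Phi_j^i$ is an isometry sending the origin to $\gamma_j(a^i)$, and then let $P(\varepsilon^i) \to 0$. Your only addition is making the componentwise-to-full-vector step explicit with the $\sqrt{n}$ factor, which the paper leaves implicit.
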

We have for any $i \in \mathbb{N}$, any $s \in [a^i,b^i]$, and any $j \in \{1,\dots,n \}$,
$$
|\tilde{\gamma}_j(s) - \gamma_j(a^i)| 
= |\Phi_j^i( \eta_j^i(s)) - \gamma_j(a^i)|
= | \eta_j^i(s)| < P(\varepsilon^i)
$$
by \eqref{cond_beta} since $(\Phi_j^i)^{-1}(\gamma_j(a^i)) = (0,0)$ and $\Phi_j^i$ is an isometry on $\mathbb{R}^2$.
Since $P(\varepsilon^i) \to 0$ as $i \to \infty$, the claim is proven.

\begin{claim}
\label{deriv_claim}
$\sup_{s \in [a^i,b^i]} |\tilde{\Gamma}'(s) - \Gamma'(a^i)| \to 0$ as $i \to \infty$.
\end{claim}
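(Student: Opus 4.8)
The plan is to estimate $|\tilde\Gamma'(s) - \Gamma'(a^i)|$ component by component, splitting it into the horizontal part (the $\tilde\gamma_j'$) and the vertical part ($\tilde h'$), and to show that each piece is controlled by a quantity tending to $0$ as $i\to\infty$. For the horizontal components, fix $j$ and $s \in [a^i,b^i]$ and write $\tilde\gamma_j'(s) = A_j^i (\eta_j^i)'(s)$. Since $A_j^i$ is an orthogonal matrix (hence an isometry) and, by Remark~\ref{remark}, $A_j^i(\gamma_j'(a^i)\cdot\mathbf u_j^i, \gamma_j'(a^i)\cdot\mathbf v_j^i) = \gamma_j'(a^i)$, we get
\begin{equation*}
|\tilde\gamma_j'(s) - \gamma_j'(a^i)| = \left| A_j^i\bigl((\eta_j^i)'(s) - (\gamma_j'(a^i)\cdot\mathbf u_j^i, \gamma_j'(a^i)\cdot\mathbf v_j^i)\bigr) \right| = \left| (\eta_j^i)'(s) - (\gamma_j'(a^i)\cdot\mathbf u_j^i, \gamma_j'(a^i)\cdot\mathbf v_j^i) \right| < P(\varepsilon^i)
\end{equation*}
directly from the second inequality in \eqref{cond_beta}. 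Since $P(\varepsilon^i)\to 0$ as $i\to\infty$ (because $\varepsilon^i\to 0$ and $P(t) = C'(t^{1/2}+t^2)$), the horizontal components converge uniformly to $\Gamma'(a^i)$ on $[a^i,b^i]$.

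For the vertical component we must compare $\tilde h'(s) = 2\sum_{j=1}^n \omega(\tilde\gamma_j'(s), \tilde\gamma_j(s))$ with $h'(a^i) = 2\sum_{j=1}^n \omega(\gamma_j'(a^i), \gamma_j(a^i))$, where the latter equality is exactly the horizontality hypothesis \eqref{horiz} at the point $a^i \in K$. I would bound
\begin{equation*}
|\omega(\tilde\gamma_j'(s), \tilde\gamma_j(s)) - \omega(\gamma_j'(a^i), \gamma_j(a^i))| \leq |\omega(\tilde\gamma_j'(s) - \gamma_j'(a^i), \tilde\gamma_j(s))| + |\omega(\gamma_j'(a^i), \tilde\gamma_j(s) - \gamma_j(a^i))|.
\end{equation*}
The first term is at most $|\tilde\gamma_j'(s) - \gamma_j'(a^i)|\,|\tilde\gamma_j(s)|$; here $|\tilde\gamma_j'(s) - \gamma_j'(a^i)| < P(\varepsilon^i)$ from the previous paragraph, while $|\tilde\gamma_j(s)| = |\Phi_j^i(\eta_j^i(s))|$, which is bounded since $|\eta_j^i(s)| < P(\varepsilon^i)$ and $|\gamma_j(a^i)|$ is bounded uniformly in $i$ (as $\gamma_j$ is continuous on the compact set $K$ and $a^i\in K$). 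The second term is at most $|\gamma_j'(a^i)|\,|\tilde\gamma_j(s) - \gamma_j(a^i)| \le M \cdot P(\varepsilon^i)$ by the choice of $M$ and by Claim~\ref{cont} (more precisely, by the displayed bound $|\tilde\gamma_j(s) - \gamma_j(a^i)| < P(\varepsilon^i)$ established in its proof). Summing over $j$, we obtain $|\tilde h'(s) - h'(a^i)| \le C P(\varepsilon^i)$ for a constant $C$ depending only on $M$ and $\sup_K |\gamma_j|$, uniformly in $s\in[a^i,b^i]$.

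Combining the horizontal and vertical estimates gives $\sup_{s\in[a^i,b^i]} |\tilde\Gamma'(s) - \Gamma'(a^i)| \le C'' P(\varepsilon^i) \to 0$ as $i\to\infty$, which is the claim. I expect the only mild subtlety to be bookkeeping: making sure the boundedness of $|\tilde\gamma_j(s)|$ is justified (it rests on $|\gamma_j(a^i)|\le\sup_K|\gamma_j|<\infty$ together with the already-proven $C^1$-norm control from the lemma), and that $\gamma_j'(a^i)$ is bounded by $M$ by the very choice of $M$ made just before the lemma. There is no genuinely hard step here — the real work was front-loaded into the lemma's bound \eqref{cond_beta}, and this claim is essentially the payoff of that construction.
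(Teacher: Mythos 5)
Your proposal is correct and follows essentially the same route as the paper: the horizontal components are handled exactly as in the paper's proof (the isometry $A_j^i$ plus the bound \eqref{cond_beta}), and your treatment of $\tilde h'$ just spells out, via bilinearity of $\omega$ and the uniform bounds on $|\gamma_j(a^i)|$ and $|\gamma_j'(a^i)|$, the estimate the paper states more tersely as a consequence of \eqref{horiz}, the horizontal-lift definition, Claim~\ref{cont}, and the first part of this claim. No gaps.
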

We have for any $i \in \mathbb{N}$, any $s \in [a^i,b^i]$, and any $j \in \{1,\dots,n \}$,
\begin{align*}
|\tilde{\gamma}_j'(s) - \gamma_j'(a^i)| 
&= |(\Phi_j^i \circ \eta_j^i)'(s)) - \gamma_j'(a^i)| \\
&= |A_j^i ((\eta_j^i)'(s)) - A_j^i (\gamma_j'(a^i) \cdot \mathbf{u}_j^i, \gamma_j'(a^i) \cdot \mathbf{v}_j^i)| \\
&= |(\eta_j^i)'(s) - (\gamma_j'(a^i) \cdot \mathbf{u}_j^i, \gamma_j'(a^i) \cdot \mathbf{v}_j^i)| < P(\varepsilon^i)
\end{align*}
by \eqref{cond_beta}. Finally, by \eqref{horiz} and the definition of a horizontal lift



\begin{align*}
\sup_{s \in [a^i,b^i]} &|\tilde{h}'(s) - h'(a^i)| \\
&\leq \sup_{s \in [a^i,b^i]} 2 \sum_{j=1}^n \left|(\tilde{f}_j'(s) \tilde{g}_j(s) - \tilde{g}_j'(s) \tilde{f}_j(s)) - (f_j'(a^i) g_j(a^i) - g_j'(a^i) f_j(a^i)) \right| 
\end{align*}
which can be made arbitrarily small as $i \to \infty$ because of the convergences in Claim~\ref{cont} and this claim.
This proves the claim.

By definition, $\tilde{\Gamma}$ is $C^1$ on $(a^i,b^i)$ for any $i \in \mathbb{N}$, so it is $C^1$ on $I \setminus K$.
We will now verify the differentiability of $\tilde{\Gamma}$ on $K$.

\begin{claim}
For any $t \in K$, $\tilde{\Gamma}$ is differentiable at $t$ and $\tilde{\Gamma}'(t) = \Gamma'(t)$.
\end{claim}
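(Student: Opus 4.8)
The plan is to establish the Whitney-type estimate
$$\lim_{\substack{s\to t\\ s\in I}}\frac{\bigl|\tilde\Gamma(s)-\tilde\Gamma(t)-(s-t)\Gamma'(t)\bigr|}{|s-t|}=0\qquad\text{for each fixed }t\in K.$$
Once this is known, $\tilde\Gamma$ is differentiable at every point of $I$ with $\tilde\Gamma'|_K=\Gamma'$ (it is already $C^1$ on the open set $I\setminus K$), and then extending $\tilde\gamma$ to a $C^1$ curve on all of $\mathbb R$ and passing to its horizontal lift produces the desired curve $\tilde\Gamma:\mathbb R\to\mathbb H^n$. Before splitting into cases I would first record the behaviour at the endpoints of a gap $(a^i,b^i)$: by construction $\tilde\Gamma(a^i)=\Gamma(a^i)$ and, using Claim \ref{integral}, $\tilde\Gamma(b^i)=\Gamma(b^i)$; moreover $\tilde\Gamma'(a^i)=\Gamma'(a^i)$ and $\tilde\Gamma'(b^i)=\Gamma'(b^i)$, where the $\tilde\gamma$-components follow from \eqref{cond_deriv} together with the fact that $\Phi_j^i$ carries the standard frame of the $x_jy_j$-plane to $\langle\mathbf u_j^i,\mathbf v_j^i\rangle$ (so that $\gamma_j'(a^i)\cdot\mathbf u_j^i$ and $\gamma_j'(a^i)\cdot\mathbf v_j^i$ reassemble into $\gamma_j'(a^i)$), and the $\tilde h$-component follows because the lift is horizontal and $\Gamma$ satisfies \eqref{horiz}. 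In particular $\tilde\Gamma$ is $C^1$ on each \emph{closed} interval $[a^i,b^i]$, so $\tilde\Gamma(s)-\tilde\Gamma(a^i)=\int_{a^i}^s\tilde\Gamma'$ for $s\in[a^i,b^i]$.

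Now fix $t\in K$ and, by symmetry, consider $s\to t^+$. There are three cases. If $s\in K$, the quotient tends to $0$ because $\Gamma\in\mathfrak C^1(K,\mathbb R^{2n+1})$ with Whitney derivative $\Gamma'$ (the $h$-component being part of the hypothesis, and in any case covered by Remark \ref{remark_}). If $t$ is the left endpoint of a gap, say $t=a^i$, then every $s$ sufficiently close to $t$ from the right lies in $(a^i,b^i)$, and since $\tilde\Gamma$ is $C^1$ on $[a^i,b^i]$ with $\tilde\Gamma'(a^i)=\Gamma'(a^i)=\Gamma'(t)$, the one-sided difference quotient at $t$ converges to $\Gamma'(t)$. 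In the remaining case $s$ lies in a gap $(a^i,b^i)$ with $t<a^i$, so $a^i\in K$, and I would use the splitting
$$\tilde\Gamma(s)-\tilde\Gamma(t)-(s-t)\Gamma'(t)=\Bigl[\tilde\Gamma(s)-\tilde\Gamma(a^i)-(s-a^i)\tilde\Gamma'(a^i)\Bigr]+(s-a^i)\bigl[\Gamma'(a^i)-\Gamma'(t)\bigr]+\Bigl[\Gamma(a^i)-\Gamma(t)-(a^i-t)\Gamma'(t)\Bigr].$$
Dividing by $|s-t|$ and using $|s-a^i|\le|s-t|$ and $|a^i-t|\le|s-t|$, the three pieces are bounded respectively by $\sup_{[a^i,b^i]}|\tilde\Gamma'-\tilde\Gamma'(a^i)|$ (write the first bracket as $\int_{a^i}^s(\tilde\Gamma'-\tilde\Gamma'(a^i))$ and invoke Claim \ref{deriv_claim}), by $|\Gamma'(a^i)-\Gamma'(t)|$ (continuity of $\Gamma'$), and by $|\Gamma(a^i)-\Gamma(t)-(a^i-t)\Gamma'(t)|/|a^i-t|$ (the Whitney property of $\Gamma$ on $K$).

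The point needing care is that, in this last case, $i\to\infty$ and hence $a^i\to t$ as $s\to t^+$: if infinitely many such $s$ fell in a single fixed gap $(a^i,b^i)$, then $s>a^i>t$ would be bounded away from $t$, a contradiction; and then $t<a^i<s\to t$ forces $a^i\to t$, so all three displayed bounds tend to $0$. Thus the only real obstacle is the organization of this case distinction — separating ``$t$ is an endpoint of a gap'', handled directly by the $C^1$ regularity of the lift on the closed gap interval, from ``$s$ approaches $t$ through infinitely many shrinking gaps'', handled by the three-term estimate above together with Claim \ref{deriv_claim} and the Whitney property on $K$. Once this is in place every estimate is routine, and the case $s\to t^-$ is entirely symmetric, with right endpoints $b^i$ playing the role of the left endpoints $a^i$.
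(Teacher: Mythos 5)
Your proposal is correct and follows essentially the same route as the paper: the same three-term decomposition around the gap endpoint $a^i$ (controlled by Claim~\ref{deriv_claim}, continuity of $\Gamma'$ on $K$, and the Whitney condition), the same treatment of gap endpoints via the $C^1$ regularity of the lift on the closed interval $[a^i,b^i]$ together with \eqref{cond_deriv}, \eqref{horiz} and Claim~\ref{integral}, and the same observation that the gap index must tend to infinity as $s\to t$. The only difference is organizational (a direct case analysis on $s$ rather than the paper's subsequence argument), which is immaterial.
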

Suppose $t \in K$ (so $\tilde{\Gamma}(t) = \Gamma(t)$).
If $t = a^i$ for some $i \in \mathbb{N}$, then for any $j \in \{1,\dots,n \}$ and $0 < \delta < b^i-a^i$, 
we can use the definition of $\tilde{\gamma}^i$ to write
\begin{align*}
\delta^{-1} |\tilde{\gamma}_j(a^i + \delta) &- \tilde{\gamma}_j(a^i) - \delta \gamma_j'(a^i)|\\
&= \delta^{-1} |A_j^i(\eta_j^i(a^i+\delta)) + c_j^i - (A_j^i(\eta_j^i(a^i)) + c_j^i) - \delta A_j^i((\eta_j^i)'(a^i))|\\
&= \delta^{-1} |\eta_j^i(a^i+\delta) - \eta_j^i(a^i) - \delta (\eta_j^i)'(a^i)|
\end{align*}
which vanishes as $\delta \to 0$ since $\eta_j^i$ is differentiable from the right at $a^i$.
Thus $\tilde{\gamma}$ is differentiable from the right at $a^i$ and the right derivative equals $\gamma'(a^i)$.
Moreover,
$$
\lim_{\delta \to 0^+} \tilde{\gamma}_j'(a^i + \delta) 
= \lim_{\delta \to 0^+} A_j^i((\eta_j^i)'(a^i + \delta))
= A_j^i((\eta_j^i)'(a^i))
= \gamma_j'(a^i).
$$
Thus $\tilde{\gamma}'$ is continuous from the right at $a^i$.
Now $\tilde{\Gamma}$ was constructed on $(a^i,b^i)$ by lifting $\gamma(a^i)$ to the height $h(a^i)$.
Hence $\int_{a^i}^c \tilde{h}' = \tilde{h}(c) - h(a^i)$ for any $c \in (a^i,b^i)$.
Thus for $0 < \delta < b^i-a^i$,
\begin{align*}
\delta^{-1} |\tilde{h}&(a^i + \delta) - \tilde{h}(a^i) - \delta h'(a^i)|\\
&\leq 2 \delta^{-1} \sum_{j=1}^n \int_{a^i}^{a^i + \delta} \left|(\tilde{f}_j'(s) \tilde{g}_j(s) - \tilde{g}_j'(s) \tilde{f}_j(s)) - (f_j'(a^i) g_j(a^i) - g_j'(a^i) f_j(a^i)) \right| \, ds
\end{align*}
which vanishes as $\delta \to 0$ by the right sided continuity of $\tilde{\gamma}$ and $\tilde{\gamma}'$ at $a^i$.
Therefore $\tilde{\Gamma}$ is differentiable from the right at $a^i$ and the right derivative is $\Gamma'(a^i)$.

We can similarly argue to show that $\tilde{\gamma}$ is differentiable from the left at $b^i$ for any $i \in \mathbb{N}$
with left derivative equal to $\gamma'(b^i)$ and that $\tilde{\gamma}'$ is continuous from the left at $b^i$.
Applying Claim~\ref{integral} with $0 < \delta < b^i-a^i$ gives
\begin{align*}
\delta^{-1} |\tilde{h}(b^i - \delta) - \tilde{h}(b^i) + \delta h'(b^i)|
& =\delta^{-1} |\delta h'(b^i) + (\tilde{h}(b^i - \delta) -\tilde{h}(a^i)) - (h(b^i)-h(a^i))|\\
&\leq \delta^{-1} \int_{b^i - \delta}^{b^i} \left|h'(b^i) - \tilde{h}'(s) \right| \, ds
\end{align*}
which vanishes as $\delta \to 0$ as above.
Therefore $\tilde{\Gamma}$ is differentiable from the left $b^i$ and the left derivative equals $\Gamma'(b^i)$.

We will now show that $\tilde{\Gamma}$ is differentiable from the right at any $t \in K$.
Suppose now that $t \neq a^i$ for any $i \in \mathbb{N}$ since we already proved right hand differentiability at $a^i$ above.
(We may also suppose that $t \neq \max \{ K \}$.)
Fix $\tilde{\varepsilon}>0$.
Let $\{ t^k \}$ be any decreasing sequence in $K$ with $t^k \to t$. 
Since $\Gamma \in \mathfrak{C}^1(K,\mathbb{R}^{2n+1})$, 
there is some $N>0$ so that for any $k>N$
$$
(t^k-t)^{-1} |\Gamma(t^k) - \Gamma(t) - (t^k-t) \Gamma'(t) | < \tilde{\varepsilon}.
$$

Suppose there is a decreasing sequence $\{ t^k \}$ in $I \setminus K$ with $t^k \to t$.
Then $t^k \in (a^{i_k},b^{i_k})$ for some $i_k \in \mathbb{N}$ for every $k \in \mathbb{N}$.
(Notice that $i_k \to \infty$ as $k \to \infty$ since $t \neq a^{i_k}$ for any $k \in \mathbb{N}$.)
Now
\begin{align*}
(t^k-t)^{-1} |\tilde{\Gamma}(t^k) - \Gamma(t) - &(t^k-t) \Gamma'(t) |  \\
&\leq (t^k-t)^{-1} |\tilde{\Gamma}(t^k) - \Gamma(a^{i_k}) - (t^k -a^{i_k})\Gamma'(a^{i_k})| \\
& \quad + (t^k-t)^{-1} |(t^k -a^{i_k})\Gamma'(a^{i_k}) - (t^k -a^{i_k})\Gamma'(t)| \\
& \quad + (t^k-t)^{-1} |\Gamma(a^{i_k}) - \Gamma(t) - (a^{i_k}-t) \Gamma'(t)|.
\end{align*}
We may bound the first term on the right as follows:
$$
(t^k-t)^{-1} |\tilde{\Gamma}(t^k) - \Gamma(a^{i_k}) - (t^k -a^{i_k})\Gamma'(a^{i_k})|  \leq (t^k-t)^{-1} \int_{a^{i_k}}^{t^k} \left| \tilde{\Gamma}'(s) - \Gamma'(a^{i_k}) \right| \, ds.
$$
By Claim~\ref{deriv_claim}, this is bounded by $\tilde{\varepsilon}$ for large enough $k$ since $t^k - a^{i_k} < t^k-t$.
The second term can be bounded by $|\Gamma'(a^{i_k})-\Gamma'(t)|$.
Since $\Gamma'$ is continuous on $K$, this may also be made less than $\tilde{\varepsilon}$ for large $k$.
Finally, the third term can be made smaller than $\tilde{\varepsilon}$ since 
$\Gamma \in \mathfrak{C}^1(K,\mathbb{R}^{2n+1})$ and since $(a^{i_k}-t) / (t^k-t) \leq 1$.

Since any decreasing sequence $\{ t^k \}$ in $I$ with $t^k \to t$ 
either has a subsequence entirely contained in $K$ or a subsequence entirely contained in $I \setminus K$,
we have proven the differentiability of $\Gamma$ from the right for any $t \in K$ ($t \neq \max \{ K \}$) with right derivative equal to $\Gamma'(t)$.
By an identical argument involving an increasing sequence $\{ t^k \}$ in $I$ with $t^k \to t$ 
when $t \neq b^i$ and $t \neq \min \{ K \}$, 
we have that $\Gamma$ is differentiable from the left at any $t \in K$ ($t \neq \min \{ K \}$) with left derivative $\Gamma'(t)$.
Thus we may conclude the statement of the claim.

\begin{claim}
$\tilde{\Gamma}$ is $C^1$ on $I$.
\end{claim}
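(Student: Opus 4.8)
The plan is to prove the claim by showing that $\tilde{\Gamma}':I\to\mathbb{R}^{2n+1}$ is continuous. Combined with the preceding claim (which gives that $\tilde{\Gamma}$ is differentiable at every point of $K$, hence in particular continuous there, with $\tilde{\Gamma}'|_K=\Gamma'$) and the fact that $\tilde{\Gamma}$ is $C^1$ on the open set $I\setminus K$ by construction, this yields $\tilde{\Gamma}\in C^1(I)$. Since $\tilde{\Gamma}'$ is already continuous on $I\setminus K$, all that remains is to establish continuity of $\tilde{\Gamma}'$ at each $t\in K$.

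Fix $t\in K$ and let $\{s^k\}\subset I$ be any sequence with $s^k\to t$; I will show $\tilde{\Gamma}'(s^k)\to\Gamma'(t)$, which by the usual subsequence principle suffices. It is enough to treat two cases. If, after passing to a subsequence, $s^k\in K$ for all $k$, then $\tilde{\Gamma}'(s^k)=\Gamma'(s^k)\to\Gamma'(t)$ by continuity of $\Gamma'$ on $K$. If instead $s^k\in I\setminus K$ for all $k$, write $s^k\in(a^{i_k},b^{i_k})$. Because at most one complementary interval has $t$ as its left endpoint and at most one has $t$ as its right endpoint, and because an integer sequence with a bounded subsequence has a constant subsequence, after passing to a further subsequence we are in one of the situations: (a) $i_k\equiv i_0$ with $a^{i_0}=t$; (b) $i_k\equiv i_0$ with $b^{i_0}=t$; or (c) $i_k\to\infty$. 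Indeed, if $\{i_k\}$ had a bounded subsequence then along a constant sub-subsequence $i_k\equiv i_0$ we would get $t\in\overline{(a^{i_0},b^{i_0})}$, and since $t\in K$ is disjoint from $(a^{i_0},b^{i_0})$ this forces $t\in\{a^{i_0},b^{i_0}\}$.

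In case (a) the points $s^k$ approach $a^{i_0}$ from inside $(a^{i_0},b^{i_0})$, so $\tilde{\Gamma}'(s^k)\to\Gamma'(a^{i_0})=\Gamma'(t)$ by the right-sided continuity of $\tilde{\gamma}$ and $\tilde{\gamma}'$ at $a^{i_0}$ proven in the preceding claim (the corresponding statement for $\tilde{h}'$ follows from this together with the identity $\tilde{h}'(s)=2\sum_{j=1}^n(\tilde{f}_j'(s)\tilde{g}_j(s)-\tilde{g}_j'(s)\tilde{f}_j(s))$, valid on $(a^{i_0},b^{i_0})$, and \eqref{horiz}); case (b) is symmetric. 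In case (c), for each $k$ we estimate
\[
|\tilde{\Gamma}'(s^k)-\Gamma'(t)|\le|\tilde{\Gamma}'(s^k)-\Gamma'(a^{i_k})|+|\Gamma'(a^{i_k})-\Gamma'(t)|.
\]
The first term tends to $0$ by Claim~\ref{deriv_claim}, since $i_k\to\infty$ and $s^k\in[a^{i_k},b^{i_k}]$; for the second term, $a^{i_k}\in K$ and $|a^{i_k}-t|\le(b^{i_k}-a^{i_k})+|s^k-t|\to0$ (the gap lengths tend to $0$ because the intervals are pairwise disjoint and $I$ is bounded), so continuity of $\Gamma'$ on $K$ gives $\Gamma'(a^{i_k})\to\Gamma'(t)$. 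In every case the chosen subsequence satisfies $\tilde{\Gamma}'(s^k)\to\Gamma'(t)$, so $\lim_k\tilde{\Gamma}'(s^k)=\Gamma'(t)$; hence $\tilde{\Gamma}'$ is continuous at $t$, and the claim follows. The only mildly delicate point here is the bookkeeping that organizes sequences in $I\setminus K$ approaching $t$ — ruling out that $\{s^k\}$ stays trapped in a single complementary interval not having $t$ as an endpoint, and otherwise forcing $i_k\to\infty$ so that Claim~\ref{deriv_claim} applies; all the substantive estimates were already carried out in Claims~\ref{cont} and~\ref{deriv_claim} and in the differentiability claim.
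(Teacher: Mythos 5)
Your proposal is correct and follows essentially the same route as the paper: continuity of $\tilde{\Gamma}'$ at points of $K$ is reduced, via a subsequence dichotomy, to the one-sided continuity at interval endpoints $a^i,b^i$ established in the differentiability claim (together with the lift identity for $\tilde h'$ and \eqref{horiz}) and, for sequences in $I\setminus K$ with $i_k\to\infty$, to Claim~\ref{deriv_claim} plus continuity of $\Gamma'$ on $K$. The only difference is cosmetic bookkeeping (arbitrary sequences with a constant-index versus divergent-index split, instead of the paper's separate treatment of right and left limits), so no further comment is needed.
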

We have already shown that $\tilde{\Gamma}$ is differentiable on $I$ with $\tilde{\Gamma}'|_K = \Gamma'$.
Since $\tilde{\Gamma}$ is $C^1$ on each $(a^i,b^i)$, $\tilde{\Gamma}'$ is continuous on $I \setminus K$.
It remains to show that $\tilde{\Gamma}'$ is continuous on $K$.

Fix $t \in K$.
If $t = a^i$ for some $i \in \mathbb{N}$, we showed in the proof of the previous claim that $\tilde{\gamma}'$ is continuous from the right at $t$.
This gives for any $0 < \delta < b^i-a^i$
\begin{align*}
|\tilde{h}'&(a^i + \delta) - \tilde{h}'(a^i)| \\
&\leq 2 \sum_{j=1}^n \left|(\tilde{f}_j'(a^i + \delta) \tilde{g}_j(a^i + \delta) - \tilde{g}_j'(a^i + \delta) \tilde{f}_j(a^i + \delta)) - (f_j'(a^i) g_j(a^i) - g_j'(a^i) f_j(a^i)) \right|
\end{align*}
which vanishes as $\delta \to 0$, and so $\tilde{\Gamma}'$ is continuous from the right at $a^i$.
A similar argument gives continuity of $\tilde{\Gamma}'$ from the left at $b^i$.

Suppose $t \neq a^i$ for any $i \in \mathbb{N}$ and $t \neq \max \{ K \}$.
Let $\{ t^k \}$ be a decreasing sequence in $K$ with $t^k \to t$.
Then $|\Gamma'(t) - \Gamma'(t+\delta^k)|$ 
may be made arbitrarily small when $k$ is large since $\Gamma'$ is continuous on $K$.
If there is a decreasing sequence $\{ t^k \}$ in $I \setminus K$ with $t^k \to t$, then $t^k \in (a^{i_k},b^{i_k})$ for some $i_k \in \mathbb{N}$ for every $k \in \mathbb{N}$, and so
$$
|\Gamma'(t) - \tilde{\Gamma}'(t^k)| \leq |\Gamma'(t) - \Gamma'(a^{i_k})| +|\Gamma'(a^{i_k}) - \tilde{\Gamma}'(t^k)|
$$
may be made arbitrarily small for large $k$ by Claim~\ref{deriv_claim}.
As above, since any decreasing sequence $\{ t^k \}$ in $I$ with $t^k \to t$ 
either has a subsequence entirely contained in $K$ or a subsequence entirely contained in $I \setminus K$,
we have shown that $\tilde{\Gamma}'$ is continuous from the right at $t$.
A similar argument when $t \neq b^i$ and $t \neq \min \{ K \}$ involving an increasing sequence $\{ t^k \}$ gives continuity of $\tilde{\Gamma}'$ from the left on $K$.
This proves the claim

Extending $\tilde{\Gamma}$ from $I$ to $\mathbb{R}$ in a smooth, horizontal way completes the proof of the theorem.
\end{proof}

We will now see that the Luzin approximation of horizontal curves in $\mathbb{H}^n$ follows from the above result as it does in the classical case.
As mentioned in the introduction, this is a new proof of the result of Speight \cite{gareth}.

\begin{corollary}
\label{corollary}
Suppose $\Gamma=(f_1,g_1,\dots,f_n,g_n,h):[a,b] \to \mathbb{H}^n$ is horizontal. 
Then, for every $\varepsilon > 0$, there is a $C^1$, horizontal curve $\tilde{\Gamma}:\mathbb{R} \to \mathbb{H}^n$ and a compact set $E \subset [a,b]$ with $|[a,b] \setminus E| < \varepsilon$ so that $\tilde{\Gamma}(t) = \Gamma(t)$ and $\tilde{\Gamma}'(t) = \Gamma'(t)$ for every $t \in E$.
\end{corollary}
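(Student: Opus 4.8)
The plan is to deduce Corollary~\ref{corollary} from Theorem~\ref{main} by finding a large compact set on which the restriction of $\Gamma$ together with the restriction of its (a.e.\ defined) derivative $\Gamma'$ satisfies the Whitney conditions \eqref{area} and \eqref{horiz}. First I would recall that, since $\Gamma$ is horizontal and absolutely continuous, $\gamma = (f_1,g_1,\dots,f_n,g_n)$ is absolutely continuous and \eqref{horizontal} holds a.e., so $h' = 2\sum_j(f_j'g_j - f_jg_j')$ a.e.\ on $[a,b]$. In particular each component of $\Gamma'$ is in $L^1$. By the classical Whitney/Lusin theorem for Lipschitz or $C^1$ approximation of $L^1$ and Sobolev functions (or directly: by Lusin's theorem applied to $\gamma'$ combined with a Vitali-type density argument), for every $\varepsilon>0$ there is a compact set $E \subset [a,b]$ with $|[a,b]\setminus E| < \varepsilon$ such that $\gamma'|_E$ is continuous and, at every point of $E$, $\gamma$ is differentiable with derivative $\gamma'$ \emph{in the Whitney sense relative to $E$}; i.e.\ $\gamma|_E \in \mathfrak{C}^1(E,\mathbb{R}^{2n})$ with Whitney derivative $\gamma'|_E$. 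The standard way to get the Whitney (uniform) estimate rather than merely pointwise differentiability is to first pass to a compact set where $\gamma'$ is continuous, then shrink further using Egorov's theorem applied to the difference quotients, or to invoke the known fact that a.e.\ point of an $L^1$ function is a Lebesgue point and on a compact set of near-full measure the remainder $\gamma(b)-\gamma(a)-\int_a^b \gamma'$ interacts with $|b-a|$ uniformly.

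Next I would handle the vertical component. On the compact set $E$ just produced, I claim \eqref{area} holds, i.e.\ $h(b)-h(a) - 2\sum_j(f_j(b)g_j(a)-f_j(a)g_j(b)) = o(|b-a|^2)$ uniformly for $a,b\in E$. The key point is that, by the argument already carried out in the proof of Proposition~\ref{necessary}, one has the identity
\begin{equation*}
\int_a^b \big(f_j'(t)g_j(t) - g_j'(t)f_j(t)\big)\,dt = f_j(b)g_j(a) - f_j(a)g_j(b) + o(|b-a|^2)
\end{equation*}
uniformly, \emph{provided} $f_j$ and $g_j$ are $C^1$ on the relevant interval — but here $\gamma$ is only absolutely continuous. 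However, the estimates in that proof only used: (i) $|f_j'|, |g_j'| \le M$, which fails in general for an AC curve; and (ii) the uniform Whitney remainder bound for $g_j$ and $f_j$ on the compact set. Since $\gamma'$ need not be bounded, I would instead first shrink $E$ so that additionally $\gamma'$ is bounded on $E$ — this is harmless for the measure estimate because $\gamma' \in L^1$ — and then, crucially, I would replace $\Gamma$ on each complementary interval of $E$ by a \emph{nicer} curve so the estimates apply; but actually the cleaner route is: since $h(b)-h(a) = \int_a^b h'$ and $h' = 2\sum_j \omega(\gamma_j',\gamma_j)$, the exact integral identity of Proposition~\ref{necessary}'s proof goes through verbatim on each interval $[a,b]$ with $a,b\in E$ once we know $\gamma'$ is bounded on $E$ and $\gamma_j|_E$ has a uniform Whitney remainder — the integrals $\int_a^b f_j'(t)[g_j(t)-g_j(a)-g_j'(a)(t-a)]\,dt$ are over all of $[a,b]$, where $f_j'$ may be large off $E$, so this does require care: the safe fix is to use that $|\gamma_j(t)-\gamma_j(a)| \le \int_a^t |\gamma_j'|$ and split using absolute continuity of the integral of $|\gamma'|$, which still yields $o(|b-a|^2)$ since $|b-a|\to 0$ forces the full integral of $|\gamma'|$ over $[a,b]$ to be $o(|b-a|)$ \emph{only on a density-one subset} — hence one more application of a density/Egorov argument restricting to a compact $E$ of near-full measure on which $\int_a^b|\gamma'| \le C|b-a|$ uniformly. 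Once $\gamma'$ is (essentially) bounded on $E$, both remainders are $O(M)\cdot o(|b-a|^2)$ and \eqref{area} follows.

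For \eqref{horiz}: at a.e.\ point $s\in[a,b]$ the horizontality equation $h'(s) = 2\sum_j(f_j'(s)g_j(s)-g_j'(s)f_j(s))$ holds, so after intersecting $E$ with this full-measure set (still compact after a further small shrinking, or: the property "$h'(s)=2\sum_j(\dots)$" holds on a set of full measure, and $E$ can be chosen inside it) we get \eqref{horiz} at every $s\in E$. Thus $\Gamma|_E \in \mathfrak{C}^1(E,\mathbb{R}^{2n+1})$ with Whitney derivative $\Gamma'|_E$ satisfying \eqref{area} and \eqref{horiz}, and Theorem~\ref{main} produces a $C^1$ horizontal $\tilde\Gamma:\mathbb{R}\to\mathbb{H}^n$ with $\tilde\Gamma|_E = \Gamma|_E$ and $\tilde\Gamma'|_E = \Gamma'|_E$, which is exactly the claim. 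I expect the main obstacle to be precisely the point flagged above: assembling, via a sequence of Egorov/Lusin/Lebesgue-density reductions, a \emph{single} compact set $E$ of measure $> b-a-\varepsilon$ on which simultaneously (a) $\gamma'$ is continuous and essentially bounded, (b) $\gamma|_E$ has a \emph{uniform} Whitney first-order remainder, (c) $\int_a^b |\gamma'| \le C|b-a|$ uniformly for $a,b \in E$ so that the Proposition~\ref{necessary} estimates survive the presence of large $\gamma'$ off $E$, and (d) horizontality holds pointwise on $E$ — and then checking that these combine to give \eqref{area}. Everything after that is a direct citation of Theorem~\ref{main}.
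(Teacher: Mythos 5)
Your overall strategy (restrict to a good compact set, verify \eqref{area} and \eqref{horiz} there, quote Theorem~\ref{main}) is the paper's strategy, and your treatment of the first-order part (Lusin for $\Gamma'$, Egorov on the difference quotients to get $\Gamma|_E\in\mathfrak{C}^1(E,\mathbb{R}^{2n+1})$, and intersecting with the full-measure set where horizontality holds pointwise) matches the paper. The genuine gap is in the step where you claim that \eqref{area} holds on $E$ once you have (a) $\gamma'|_E$ continuous and bounded, (b) a uniform Whitney remainder for $\gamma$ on $E$, and (c) a maximal-function bound $\int_a^b|\gamma'|\le C|b-a|$ for $a,b\in E$. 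These are all \emph{first-order} controls, and they only give the remainder
\begin{equation*}
h(b)-h(a)-2\sum_j\bigl(f_j(b)g_j(a)-f_j(a)g_j(b)\bigr)=2\sum_j\int_a^b\bigl[f_j'(t)(g_j(t)-g_j(a))-g_j'(t)(f_j(t)-f_j(a))\bigr]\,dt = O(|b-a|^2),
\end{equation*}
not $o(|b-a|^2)$: the Proposition~\ref{necessary} computation needs the Whitney/Taylor remainder of $g_j$ at $a$ to be $o(|t-a|)$ for \emph{all} $t$ in the interval $(a,b)$, including the gaps of $E$, where you have no such control. And this is not just a technical loss: take $\gamma(t)=(t,0,\dots,0)$ on a fat Cantor-type set $E$ and, in each complementary gap $(c_i,d_i)$ of length $\delta_i$, superimpose a closed circular loop of radius comparable to $\delta_i$ traversed at bounded speed. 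Then (a)--(d) all hold on $E$ (for pairs in $E$ the planar increments are exactly linear, and $\int_a^b|\gamma'|\lesssim|b-a|$), yet the swept area across each gap is comparable to $\delta_i^2$, so the quotient in \eqref{area} does not tend to $0$ along the pairs $(c_i,d_i)\in E\times E$. So conditions (a)--(c) genuinely do not imply \eqref{area}; the second-order condition must be built into the choice of $E$ separately.

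What the paper does instead is supply exactly this missing ingredient: it first proves a \emph{pointwise} a.e.\ second-order statement, namely that at every point $t$ of differentiability of $\Gamma$ which is also a Lebesgue point of the $f_j'$ and $g_j'$,
\begin{equation*}
\lim_{s\to t}\frac{\bigl|h(s)-h(t)-2\sum_j(f_j(s)g_j(t)-f_j(t)g_j(s))\bigr|}{(s-t)^2}=0,
\end{equation*}
by observing that this numerator is $|\bar h(s)|$ for the left-translated curve $\bar\Gamma(s)=\Gamma(t)^{-1}*\Gamma(s)$, which is horizontal, so $\bar h(t+\delta)=\int_t^{t+\delta}\bar h'$, and the integrand is controlled by the difference quotients of $f_j,g_j$ at $t$ (bounded, by differentiability) times $|f_j'|,|g_j'|$ (handled by the Lebesgue point property). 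Only then does it apply Egorov a \emph{second} time, to the two-point suprema $\phi_k(t)=\sup_{|s-t|<1/k}|h(s)-h(t)-2\sum_j(\dots)|/(s-t)^2$, to upgrade this a.e.\ pointwise statement to the uniform condition \eqref{area} on a compact set of near-full measure. Your proposal has the Egorov step for the first-order Whitney condition but lacks both the pointwise second-order lemma and the corresponding Egorov application; inserting those (and dropping the maximal-function detour, which is then unnecessary) would turn your outline into the paper's proof.
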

\begin{proof}
Since $\Gamma$ is horizontal, it is absolutely continuous as a mapping into $\mathbb{R}^{2n+1}$.
Thus it is differentiable almost everywhere in $(a,b)$ and the derivative $\Gamma'$ is $L^1$ on $(a,b)$.
Suppose that $t \in (a,b)$ is a point of differentiability of $\Gamma$ 
and that $t$ is a Lebesgue point of $f_j'$ and $g_j'$ for $j=1, \dots , n$.
Define $\bar{\Gamma} = (\bar{f}_1, \bar{g}_1, \dots, \bar{f}_n, \bar{g}_n, \bar{h}):[a,b] \to \mathbb{H}^n$ so that 
$\bar{\Gamma}(s) = \Gamma(t)^{-1} * \Gamma(s)$.
Since $\bar{\Gamma}(t) = 0$ and $\bar{\Gamma}$ is horizontal, we have for any $\delta > 0$ with $t + \delta \in [a,b]$
\begin{align*}
\frac{|\bar{h}(t+\delta)|}{\delta^2} 
&= \frac{1}{\delta^2} \left| \int_t^{t+ \delta} \bar{h}'(s) \, ds \right| 
\leq \frac{2}{\delta} \sum_{j=1}^n \int_t^{t+\delta} \frac{1}{\delta} \left|\bar{f}_j'(s) \bar{g}_j(s) - \bar{f}_j(s) \bar{g}_j'(s) \right| \, ds \\
&\leq \frac{2}{\delta} \sum_{j=1}^n  \int_t^{t+\delta} \left|\bar{f}_j'(s) \frac{\bar{g}_j(s)}{s-t} - \frac{\bar{f}_j(s)}{s-t} \bar{g}_j'(s) \right| \, ds \\
&= \frac{2}{\delta} \sum_{j=1}^n  \int_t^{t+\delta} \left|f_j'(s) \frac{g_j(s)-g_j(t)}{s-t} - \frac{f_j(s)-f_j(t)}{s-t} g_j'(s) \right| \, ds, \\
\end{align*}
and so $\frac{|\bar{h}(t+\delta)|}{\delta^2} \to 0$ as $\delta \to 0$.
Similarly, $\frac{|\bar{h}(t-\delta)|}{\delta^2} \to 0$ as $\delta \to 0$

Notice that $\bar{h}(s) = h(s) - h(t) - 2\sum_{j=1}^n(f_j(s)g_j(t) - f_j(t)g_j(s))$ for every $s \in (a,b)$.
Thus since almost every point in $(a,b)$ is a point of differentiability of $\Gamma$ 
and a Lebesgue point of each $f_j'$ and $g_j'$, 
we have 
\begin{equation}
\label{cond_cor}
\lim_{s \to t} \frac{\left| h(s) - h(t) - 2\sum_{j=1}^n(f_j(s)g_j(t) - f_j(t)g_j(s)) \right|}{(s-t)^2} = 0
\end{equation}
for almost every $t \in (a,b)$.
Denote by $E_1$ the set of all $t \in (a,b)$ satisfying both \eqref{cond_cor} and $\Gamma'(t) \in H_{\Gamma(t)}\mathbb{H}^n$.
Hence $|[a,b] \setminus E_1| = 0$.

Let $\varepsilon > 0$.
By Luzin's theorem, $\Gamma'$ is continuous on a compact set $E_2 \subset E_1$ with $|E_1 \setminus E_2| < \varepsilon/3$.
By applying Egorov's theorem to the pointwise convergent sequence of functions $\{ \psi_k \}$ defined on $E_2$ as
$$
\psi_k(t) = \sup_{s \in (t-\frac{1}{k},t+\frac{1}{k})} \left\{ \frac{|\Gamma(s)-\Gamma(t)-(s-t)\Gamma'(t)|}{|s-t|} \right\},
$$
we see that $\Gamma \in \mathfrak{C}^1(E_3,\mathbb{R}^{2n+1})$ with Whitney derivative $\Gamma'$ for a compact set 
$E_3 \subset E_2$ with $|E_2 \setminus E_3| < \varepsilon/3$.
Once again applying Egorov's theorem to the convergent sequence of functions $\{ \phi_k \}$ defined on $E_3$ as
$$
\phi_k(t) = \sup_{s \in (t-\frac{1}{k},t+\frac{1}{k})} \left\{ \frac{\left| h(s) - h(t) - 2\sum_{j=1}^n(f_j(s)g_j(t) - f_j(t)g_j(s)) \right|}{(s-t)^2} \right\},
$$
we conclude that \eqref{area} holds on a compact set 
$E_4 \subset E_3$ with $|E_3 \setminus E_4| < \varepsilon/3$.

Thus $\Gamma$ is of Whitney class $\mathfrak{C}^1(E_4,\mathbb{R}^{2n+1})$, and conditions \eqref{area} and \eqref{horiz} hold on the compact set $E_4$.
Therefore, by Theorem~\ref{main}, 
there is a $C^1$, horizontal $\tilde{\Gamma}:\mathbb{R} \to \mathbb{H}^n$ so that $\tilde{\Gamma}(t) = \Gamma(t)$ and $\tilde{\Gamma}'(t) = \Gamma'(t)$ for every $t \in E_4$ where $|[a,b] \setminus E_4| < \varepsilon$.
\end{proof}

\section{Appendix: Proof of the lemma}
\begin{proof}

Fix $i \in \mathbb{N}$ and $j \in \{ 1 ,\dots, n \}$.

To simplify notation, write 
$\alpha = \gamma_j'(a^i) \cdot \mathbf{u}_j^i$, 
$\beta = \gamma_j'(b^i) \cdot \mathbf{u}_j^i$, 
$\mu = \gamma_j'(a^i) \cdot \mathbf{v}_j^i$, 
$\nu = \gamma_j'(b^i) \cdot \mathbf{v}_j^i$, and 
\begin{equation}
\label{lambda}
\lambda = \frac{1}{n} \left[ h(b^i)-h(a^i) - 2\sum_{k=1}^n(f_k(b^i)g_k(a^i) - f_k(a^i)g_k(b^i)) \right],
\end{equation}
and so $|\lambda|/(b^i-a^i)^2 < \varepsilon^i$. 
In other words, $\alpha$ and $\beta$ are the components of the mapping $\gamma_j'$ at $a^i$ and $b^i$ respectively
in the direction of the segment $\overline{\gamma_j(a^i) \gamma_j(b^i)}$, 
and $\mu$ and $\nu$ are its components in the perpendicular direction.

First, we prove that $|\mu|<\varepsilon^i$ and $|\nu|<\varepsilon^i$.
Indeed, the magnitude of $(\gamma_j(b^i)-\gamma_j(a^i)-(b^i-a^i) \gamma_j'(a^i))$ is at least equal to the magnitude of its projection along $\mathbf{v}_j^i$.
That is,
$$
\varepsilon^i > \frac{|\gamma_j(b^i)-\gamma_j(a^i)-(b^i-a^i) \gamma_j'(a^i)|}{b^i-a^i} 
\geq \frac{|(\gamma_j(b^i)-\gamma_j(a^i)-(b^i-a^i) \gamma_j'(a^i)) \cdot \mathbf{v}_j^i|}{b^i-a^i} \\
=|\mu|
$$
since $\gamma_j(b^i)-\gamma_j(a^i)$ is orthogonal to $\mathbf{v}_j^i$.
Replacing $\gamma_j'(a^i)$ with $\gamma_j'(b^i)$ in this argument gives $|\nu|<\varepsilon^i$.
We also have 
\begin{equation}
\label{alpha}
\varepsilon^i > \frac{|(\gamma_j(b^i)-\gamma_j(a^i)-(b^i-a^i) \gamma_j'(a^i)) \cdot \mathbf{u}_j^i|}{b^i-a^i} \\
=\left| \frac{|\gamma_j(b^i)-\gamma_j(a^i)|}{b^i-a^i} - \alpha \right|
\end{equation}
since $(\gamma_j(b^i)-\gamma_j(a^i)) \cdot \mathbf{u}_j^i = |\gamma_j(b^i)-\gamma_j(a^i)|$.
Replacing $\gamma_j'(a^i)$ with $\gamma_j'(b^i)$ in this argument gives 
$\left| \frac{|\gamma_j(b^i)-\gamma_j(a^i)|}{b^i-a^i} - \beta \right| < \varepsilon^i$.

Define $P: [0,\infty) \to [0, \infty)$ as $P(t) =C' (t^{1/2} + t^2)$ where $C'$ is a positive constant whose value will be determined by the constructions of $\eta_j^i$ and will depend only on $M$. 
In particular, the value of $C'$ will not depend on $i$ or $j$.

We will first prove the lemma in the case when $\gamma_j$, $\gamma_j'$, $a^i$, and $b^i$ satisfy
\begin{equation}
\label{case}
\left|\alpha + \beta - 9 \frac{|\gamma_j(b^i)-\gamma_j(a^i)|}{b^i-a^i} \right| > \sqrt{\varepsilon^i}.
\end{equation}

Write $\ell = |\gamma_j(b^i)-\gamma_j(a^i)|$.
By composing with a translation of the real line, 
we may assume without loss of generality that $a^i = 0$ and write $\delta := b^i$.
The bounds before the statement of the lemma give $\delta < \varepsilon^i$, $\ell < \varepsilon^i$, and $\ell/\delta < M$.
Define the curve $\eta_j^i = (x_j^i,y_j^i): [0,\delta] \to \mathbb{R}^2$ as follows:
\begin{align*}
x_j^i(t) &= At^3 + Bt^2 + Ct \\
&= \frac{\delta(\alpha+\beta) - 2 \ell}{\delta^3}t^3 + \frac{-\delta(2\alpha+\beta)+3\ell}{\delta^2}t^2 + \alpha t
\end{align*}
and
\begin{align*}
y_j^i(t) &=Dt^4 + Et^3 + Ft^2 + Gt \\
&= 7\frac{6 \delta \ell(\mu-\nu)+\delta^2(\alpha \nu-\beta \mu)-15\lambda}{2 \delta^4(\delta(\alpha+\beta)-9\ell)}t^4 \\
&+ \frac{\delta \ell(33\nu-51\mu)+\delta^2(\alpha(\mu-6\nu)+\beta(8\mu+\nu))+105\lambda}{\delta^3(\delta(\alpha+\beta)-9\ell)}t^3 \\
&- \frac{\delta \ell(24\nu-78\mu)+\delta^2(4\alpha \mu+11\beta \mu-5\alpha \nu+2\beta \nu)+105\lambda}{2\delta^2(\delta(\alpha+\beta)-9\ell)}t^2 + \mu t.
\end{align*}

One may check (by hand or with Mathematica; I did both) that this curve satisfies conditions (\ref{cond_point}), (\ref{cond_deriv}), and (\ref{cond_area}). 
Now
\begin{align*}
|x_j^i(t)| &\leq |A| \delta^3 + |B| \delta^2 + |C| \delta \\
&\leq \delta(|\alpha|+|\beta|) + 2 \ell + \delta(2|\alpha|+|\beta|)+3\ell + |\alpha| \delta \\
&< \varepsilon^i(M+M)+2\varepsilon^i+\varepsilon^i(2M + M) + 3\varepsilon^i + M\varepsilon^i 
= (6M+5)\varepsilon^i < P(\varepsilon^i)
\end{align*}
after choosing $C'$ large enough 
(since either $\varepsilon^i \leq (\varepsilon^i)^2$ or $\varepsilon^i \leq (\varepsilon^i)^{1/2}$). 
Also,
\begin{align*}
|(x_j^i)'(t) - \alpha| &\leq 3|A| \delta^2 + 2|B| \delta + |C-\alpha| \\
&\leq 3 \left| \alpha+\beta - 2 \frac{\ell}{\delta} \right| + 2 \left| -2\alpha-\beta+3\frac{\ell}{\delta} \right| \\
&< 3\varepsilon^i + 3\varepsilon^i + 4\varepsilon^i + 2\varepsilon^i= 12\varepsilon^i < P(\varepsilon^i)
\end{align*}
for large enough $C'$ since $\left| \alpha-  \frac{\ell}{\delta} \right| < \varepsilon^i$ and $\left| \beta-  \frac{\ell}{\delta} \right| < \varepsilon^i$ by \eqref{alpha}.

Now we will consider the sizes of $y$ and $y'$.
First, we examine the size of terms in $D$. We have
$$
\left| \frac{\delta \ell(\mu-\nu)}{\delta^4(\delta(\alpha+\beta)-9\ell)} \right| \delta^3 
= \frac{\frac{\ell}{\delta}|\mu-\nu|}{|\alpha+\beta-9\frac{\ell}{\delta}|} 
<\frac{2M \varepsilon^i}{\sqrt{\varepsilon^i}} = 2M\sqrt{\varepsilon^i}
$$
by \eqref{case}.
Similarly,
$$
\left| \frac{\delta^2(\alpha \nu-\beta \mu)}{\delta^4(\delta(\alpha+\beta)-9\ell)} \right| \delta^3 
= \frac{|\alpha \nu|+|\beta \mu|}{|\alpha+\beta-9\frac{\ell}{\delta}|} 
<2M\sqrt{\varepsilon^i}.
$$
Finally,
$$
\left| \frac{\lambda}{\delta^4(\delta(\alpha+\beta)-9\ell)} \right| \delta^3 = \frac{|\frac{\lambda}{\delta^2}|}{|\alpha+\beta-9\frac{\ell}{\delta}|} <\sqrt{\varepsilon^i}
$$
by \eqref{lambda}.
Thus $4|D|\delta^3 < P(\varepsilon^i)$ for large enough $C'$. 
Identical arguments may be applied to show that $3|E|\delta^2 < P(\varepsilon^i)$ 
and $2|F|\delta < P(\varepsilon^i)$. 
Therefore,
$$
|y_j^i(t)| \leq |D| \delta^4 + |E| \delta^3 + |F| \delta^2 + |G|\delta < P(\varepsilon^i), 
$$
and
$$
|(y_j^i)'(t) - \mu| \leq 4|D| \delta^3 + 3|E| \delta^2 + 2|F| \delta + |G-\mu| < P(\varepsilon^i) 
$$
for any $t \in [0,\delta]$ and large enough $C'$.
This proves (\ref{cond_beta}) and completes the proof of the lemma in this case.

We now consider the case when
$$
\left|\alpha + \beta - 9 \frac{|\gamma_j(b^i)-\gamma_j(a^i)|}{b^i-a^i} \right| \leq \sqrt{\varepsilon^i}.
$$
By composing with a translation as before, we can again write $[a^i,b^i] = [0,\delta]$.
We will first find bounds on $\alpha$ and $\beta$ in this case. We have
$$
\left|\alpha +\beta - 2\frac{\ell}{\delta} \right| - 7 \left|\frac{\ell}{\delta} \right| \leq \left|\alpha + \beta - 9 \frac{\ell}{\delta} \right| \leq \sqrt{\varepsilon^i}
$$
and so we have $|\frac{\ell}{\delta}| < \frac{\sqrt{\varepsilon^i}+2\varepsilon^i}{7}$ since $|\alpha-\frac{\ell}{\delta}| < \varepsilon^i$ and $|\beta-\frac{\ell}{\delta}| < \varepsilon^i$.
Thus also 
$$
|\alpha| < \varepsilon^i + \frac{\sqrt{\varepsilon^i}+2\varepsilon^i}{7} = \frac{\sqrt{\varepsilon^i}+9\varepsilon^i}{7} 
\quad \text{ and } \quad
|\beta| < \frac{\sqrt{\varepsilon^i}+9\varepsilon^i}{7}.
$$

We will define $\eta_j^i=(x_j^i,y_j^i):[0,\delta] \to \mathbb{R}^2$ piecewise on its domain as follows:
$$
x_j^i(t) = \left\{ 
\begin{array}{ll}
\frac{9 \delta \alpha - 27 \ell}{\delta^3} t^3 + \frac{-12\delta \alpha + 27 \ell}{2 \delta^2} t^2 + \alpha t & : t \in [0,\frac{\delta}{3}] \\
\frac{9 \delta \beta - 27 \ell}{\delta^3} t^3 + \frac{-42\delta \beta + 135 \ell}{2 \delta^2} t^2 + \frac{16 \delta \beta-54 \ell}{\delta} t -4 \delta \beta + \frac{29}{2}\ell & : t \in [\frac{2\delta}{3},\delta]
\end{array}
\right.
$$
and 
$$
y_j^i(t) = \left\{ 
\begin{array}{ll}
\frac{9 \mu}{\delta^2} t^3 - \frac{6 \mu}{\delta} t^2 + \mu t & : t \in [0,\frac{\delta}{3}] \\
\frac{9 \nu}{\delta^2} t^3 - \frac{21 \nu}{\delta} t^2 + 16 \nu t -4 \delta \nu & : t \in [\frac{2\delta}{3},\delta]
\end{array}
\right.
.
$$
On $(\frac{\delta}{3},\frac{2\delta}{3})$, define $\eta_j^i$ as
\begin{align*}
x_j^i(t) &= R \cos(\tau(t)) - R + \frac{\ell}{2} \\
y_j^i(t) &= R \sin(\tau(t))
\end{align*}
where $R = \frac{1}{2\sqrt{\pi}} | H |^{1/2}$ with
\begin{align*}
H &:= \lambda - 2 \int_0^{\delta/3} (x'y - x y') - 2 \int_{2\delta/3}^{\delta} (x'y - x y') \\
&= \lambda - \frac{\delta \ell (\mu-\nu)}{15},
\end{align*}
and $\tau:(\frac{\delta}{3}, \frac{2\delta}{3}) \to \mathbb{R}$ is defined as 
$$
\tau(t) = \pm \left(-\frac{108 \pi}{\delta^3} t^3 + \frac{162 \pi}{\delta^2}{t^2} - \frac{72\pi}{\delta}t +10 \pi \right)
$$
where we choose $+$ if $H \leq 0$ and $-$ if $H > 0$.
With this definition, $x_j^i$ and $y_j^i$ are $C^1$ on $[0,\delta]$ and conditions (\ref{cond_point}) and (\ref{cond_deriv}) are satisfied.

We can argue as we did in the proof of the previous case to show that 
$$
 |x_j^i(t)| < P(\varepsilon^i), \quad  |(x_j^i)'(t) - \alpha| < P(\varepsilon^i), \quad |y_j^i(t)| < P(\varepsilon^i), \quad |(y_j^i)'(t) - \mu| < P(\varepsilon^i)
$$
for $t \in [0,\frac{\delta}{3}] \cup [\frac{2\delta}{3},\delta]$ with large enough $C'$.
To prove condition (\ref{cond_beta}), 
it remains to find bounds for $x_j^i$, $y_j^i$, and their derivatives on $(\frac{\delta}{3},\frac{2\delta}{3})$.
We have 
$R \leq \frac{1}{2\sqrt{\pi}} \left( \varepsilon^i + \frac{2}{15} (\varepsilon^i)^3 \right)^{1/2} 
\leq \frac{1}{2\sqrt{\pi}} \left( (\varepsilon^i)^{1/2} + \sqrt{\frac{2}{15}} (\varepsilon^i)^{3/2} \right)$, and so, 
for any $t \in (\frac{\delta}{3},\frac{2\delta}{3})$,
$|x_j^i(t)| 
< P(\varepsilon^i)$
and 
$|y_j^i(t)| 
< P(\varepsilon^i)$ for large enough $C'$.

We will now prove bounds for the derivatives.
Notice that on $(\frac{\delta}{3},\frac{2\delta}{3})$ we have $|(x_j^i)'(t)| \leq |R \tau'(t)|$ and $|(y_j^i)'(t)| \leq |R \tau'(t)|$.
Now for any $t \in (\frac{\delta}{3},\frac{2\delta}{3})$
$$
|\tau'(t)| 
\leq 3\left|\frac{108 \pi}{\delta^3}\right| \left(\frac{2\delta}{3} \right)^2 + 2\left| \frac{162 \pi}{\delta^2} \right| \left(\frac{2\delta}{3} \right) + \left|\frac{72\pi}{\delta}\right|
=\frac{432 \pi}{\delta}.
$$
Therefore, $|(x_j^i)'(t)|^2$ and $|(y_j^i)'(t)|^2$ are bounded by
$$
|R \tau'(t)|^2 
=46656 \pi \left( \frac{|\lambda|}{\delta^2} + \frac{\delta \ell ( |\mu|+|\nu|)}{15\delta^2} \right) \\
= 46656 \pi \left( \varepsilon^i + \frac{2(\varepsilon^i)^{3/2}}{105}+\frac{4(\varepsilon^i)^2}{105} \right).
$$
Choosing large enough $C'$, we have $|(x_j^i)'(t)|< P(\varepsilon^i)$ and $|(y_j^i)'(t)|< P(\varepsilon^i)$.
Since $|\alpha|<\frac{\sqrt{\varepsilon^i}+9\varepsilon^i}{7}$ and $|\mu|<\varepsilon^i$, this proves condition (\ref{cond_beta}).

It remains to prove condition (\ref{cond_area}). We have $2 \int_{\delta/3}^{2\delta/3} ((x_j^i)'y_j^i - x_j^i (y_j^i)') = \mp 4 \pi R^2$ which is negative if $H < 0$ and positive if $H > 0$ (and so $\mp 4 \pi R^2 = H)$.
Thus
\begin{align*}
2 \int_0^{\delta} ((x_j^i)'y_j^i - x_j^i (y_j^i)') \\
&= 2 \int_0^{\delta/3} ((x_j^i)'y_j^i - x_j^i (y_j^i)') \mp 4 \pi (R)^2 + 2 \int_{2\delta/3}^{\delta} ((x_j^i)'y_j^i - x_j^i (y_j^i)') \\
&= \lambda = \frac{1}{n} \left[h(b^i)-h(a^i) - 2 \sum_{m=1}^n (f_m(b^i)g_m(a^i) - f_m(a^i)g_m(b^i)) \right].
\end{align*}
This completes the proof of the lemma.
\end{proof}

\end{document}